\newtheorem{theorem}{Theorem}[section]
\newtheorem{lemma}[theorem]{Lemma}
\newtheorem{proposition}[theorem]{Proposition}
\newtheorem{corollary}[theorem]{Corollary}
\theoremstyle{definition}
\newtheorem{definition}[theorem]{Definition}
\newtheorem{example}[theorem]{Example}
\theoremstyle{remark}
\newtheorem{remark}[theorem]{Remark}
\numberwithin{equation}{section}
\newcommand{\C}{\mathbb{C}}
\newcommand{\E}{\mathbb{E}}
\newcommand{\al}{\alpha}
\newcommand{\lm}{\lambda}
\newcommand{\Nor}{\mathcal N}
\begin{document}

\title[Examples of mixing subalgebras of von Neumann algebras]{Examples of mixing subalgebras of von Neumann algebras and their normalizers}

\author{Paul Jolissaint}
\address{  Universit\'e de Neuch\^atel,
       Institut de Math\'emathiques,       
       Emile-Argand 11,
       CH-2000 Neuch\^atel, Switzerland}
       
\email{paul.jolissaint@unine.ch}
\thanks{To appear in the \textit{Bulletin of the Belgian Mathematical Society Simon Stevin}}

\subjclass[2010]{Primary 46L10; Secondary 22D25}

\date{\today}

\keywords{Finite von Neumann algebras, relative weak mixing subalgebras, relative weak asymptotic homomorphism property, discrete groups}

\begin{abstract}
We discuss different mixing properties for triples of finite von Neumann algebras $B\subset N\subset M$, and we introduce families of triples of groups $H<K<G$ whose associated von Neumann algebras $L(H)\subset L(K)\subset L(G)$ satisfy $\mathcal{N}_{L(G)}(L(H))''=L(K)$. It turns out that the latter equality is implied by two conditions: the equality $\mathcal{N}_G(H)=K$ and the above mentioned mixing properties. Our families of examples also allow us to exhibit examples of pairs $H<G$ such that $L(\mathcal{N}_G(H))\not=\mathcal{N}_{L(G)}(L(H))''$.
\end{abstract}

\maketitle

\section{Weakly and strongly mixing finite von Neumann algebras}

The main purpose of the present paper is to present families of triples of groups $H\triangleleft K<G$ whose associated von Neumann algebras have mixing properties  which imply that the $L(K)$ is the von Neumann algebra generated by the normalizer of $L(H)$ in $L(G)$. Thus this section is devoted to the discussion of mixing properties for arbitrary finite von Neumann algebras.
\par
Let $1\in B\subset N\subset M$ be finite von Neumann algebras being endowed with a normal, finite, faithful, normalized trace $\tau$. The normalizer of $B$ in $M$ is denoted by $\mathcal{N}_M(B)$, and it is the group of all unitary elements $u\in U(M)$ such that $uBu^*=B$.
We assume for simplicity that $M$ has separable predual. We denote by $\E_B$ (resp. $\E_N$) the trace-preserving conditional expectation from $M$ onto $B$ (resp. $N$), and we set
$M\ominus N=\{x\in M:\E_N(x)=0\}$. For $v\in U(B)$, let $\sigma_v\in \mathrm{Aut}(M)$ be defined by $\sigma_v(x)=vxv^*$. This gives an action of $U(B)$ on $M$ that preserves the subspace $M\ominus N$. If $G$ is a group, every element $x$ of the associated von Neumann algebra 
$L(G)$
admits a Fourier series decomposition
$x=\sum_{g\in G}x(g)\lm_g$ where $x(g)=\tau(x\lm_{g}^{-1})$ and $\sum_g|x(g)|^2=\Vert x\Vert_2^2$. If $H$ is a subgroup of $G$, then $L(H)$ identifies to the von Neumann subalgebra of $L(G)$ formed by all elements $y$ such that $y(g)=0$ for every $g\in G\setminus H$. The corresponding conditional expectation $\E_{L(H)}$ satisfies $\E_{L(H)}(x)=\sum_{h\in H}x(h)\lm_h$ for every $x\in L(G)$.
\par\vspace{3mm} 
Our first definition is an extension of Definitions 2.1 and 3.4 of \cite{JS} to the case of triples as above.  

\begin{definition}
Consider a triple $1\in B\subset N\subset M$ of finite von Neumann algebras as above and the action $\sigma$ of $U(B)$ on $M$ by conjugation.
\begin{enumerate}
\item [(1)] We say that $B$ is \textbf{weakly mixing in $M$ relative to} $N$ if, for every finite 
set $F\subset M\ominus N$ and every
$\varepsilon>0$, one can find $v\in U(B)$ such that
$$
\Vert \E_{B}(x\sigma_v(y))\Vert_2=\Vert \E_{B}(xvy)\Vert_2<\varepsilon\quad\forall x,y\in F.
$$
If $B=N$, we say that $B$ is \textbf{weakly mixing} in $M$. 
\item [(2)] If $B$ is diffuse, we say that $B$ is \textbf{strongly mixing in} $M$ \textbf{relative to} $N$ if 
$$
\lim_{n\to\infty}\Vert \E_{B}(xu_ny)\Vert_2=0
$$
for all $x,y\in M\ominus N$ and all sequences $(u_n)\subset U(B)$ which converge to 0 in the weak operator topology.
If $B=N$, we say that $B$ is \textbf{strongly mixing in} $M$.
\end{enumerate}
\end{definition}

The next definition introduces a relative version of the so-called \textit{weak asymptotic homomorphism property}; the latter was used first by Robertson, Sinclair and Smith in \cite{RSS} for MASA's in order to get an easily verifiable criterion for singularity. It was proved next by Sinclair, Smith, White and Wiggins in \cite{SSWW} that, conversely, any singular MASA has the weak asymptotic homomorphism property. The relative version of the above property was introduced by Chifan in \cite{Chi} in order to prove that if $A$ is a masa in a separable type $\textrm{II}_1$ factor $M$ then the triple
$$
A\subset \mathcal{N}_M(A)''\subset M
$$
has the relative weak asymptotic homomorphism property. He used it to prove that, if $(M_i)_{i\geq 1}$ is a sequence of finite von Neumann algebras and if $(A_i)_{i\geq 1}$ is a sequence such that $A_i\subset M_i$ is a MASA for every $i$, then
$$
\overline{\bigotimes}_{i\geq 1}\mathcal{N}_{M_i}(A_i)''=(\mathcal{N}_{\overline{\bigotimes}_i M_i}(\overline{\bigotimes}_i A_i))''.
$$
This relative property is related to one-sided quasi-normalizers, as it was proved by Fang, Gao and Smith in \cite{FGS}. See Theorem 1.4 below.

\begin{definition}
Let $1\in B\subset N\subset M$ be a triple of finite von Neumann algebras as in Definition 1.1 above.
\begin{enumerate}
	\item [(1)] The triple of algebras $1\in B\subset N\subset M$ has the \textbf{relative weak asymptotic homomorphism property} if there exists a net of unitaries $(u_i)_{i\in I}$ in $B$ such that
$$
\lim_{i\in I}\Vert \E_B(xu_iy)-\E_B(\E_N(x)u_i\E_N(y))\Vert_2=0
$$
for all $x,y\in M$.
\item [(2)] The \textbf{one-sided quasi-normalizer} of $B$ in $M$ is the set of all elements $x\in M$ for which there exists a finite set $\{x_1,\ldots,x_n\}\subset M$ such that
$$
Bx\subset \sum_{i=1}^n x_iB.
$$
Following \cite{FGS}, we denote the set of these elements by $q\mathcal{N}_{M}^{(1)}(B)$.
\end{enumerate}
\end{definition}

\begin{remark}
(1) If $B\subset N\subset M$ and if $B$ is diffuse and strongly mixing in $M$ relative to $N$, then it is obviously weakly mixing in $M$ relative to $N$.
\par\noindent
(2) If $B$ is strongly mixing in $M$ relative to $N$, then every diffuse von Neumann algebra $1\in D\subset B$ is also strongly mixing in $M$ relative to $N$.
\par\noindent
(3) The following identity 
$$
\E_B(xuy)-\E_B(\E_N(x)u\E_N(y))=\E_B([x-\E_N(x)]u[y-\E_N(y)]),
$$
which holds for every $u\in U(B)$ and all $x,y\in M$, implies that 
the relative weak mixing property and the relative weak asymptotic homomorphism property are equivalent.
\end{remark}

The following theorem is the main result of \cite{FGS}.

\begin{theorem} 
(J. Fang, M. Gao and R. R. Smith)
Let $1\in B\subset N\subset M$ be a triple of finite von Neumann algebras with separable predual. Then the following conditions are equivalent:
\begin{enumerate}
	\item [(1)] The triple $B\subset N\subset M$ has the relative weak asymptotic homomorphism property, or, equivalently, $B$ is weakly mixing in $M$ relative to $N$. 
	\item [(2)] The one-sided quasi-normalizer $q\mathcal{N}_{M}^{(1)}(B)$ is contained in $N$.
\end{enumerate}
\end{theorem}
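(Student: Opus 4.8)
The plan is to derive the equivalence from Popa's dichotomy between mixing and intertwining, applied to the $B$-$B$-bimodule $\mathcal H:=L^2(M)\ominus L^2(N)$, together with standard facts about Hilbert modules over finite von Neumann algebras. Write $\hat a$ for the image of $a\in M$ in $L^2(M)$, $e_N$ (resp.\ $e_B$) for the orthogonal projection of $L^2(M)$ onto $L^2(N)$ (resp.\ onto $L^2(B)$), and $\langle M,e_B\rangle$ for the basic construction with its canonical semifinite trace $\mathrm{Tr}$. First I would reformulate (1): by Remark 1.3(3) it is equivalent to $B$ being weakly mixing in $M$ relative to $N$, and since $M$ has separable predual this is in turn equivalent, by a routine density argument over a countable $\|\cdot\|_2$-dense subset of the unit ball of $M\ominus N$, to the existence of a net $(u_i)\subset\mathcal U(B)$ with $\|\E_B(xu_iy)\|_2\to 0$ for all $x,y\in M\ominus N$. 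Since $B\subset N$, left and right multiplication by $B$ preserve $\mathcal H$, making $\mathcal H$ a $B$-$B$-bimodule with dense subbimodule $(M\ominus N)\hat 1$ of right-bounded vectors, on which the right $B$-valued inner product is $\langle\hat a,\hat b\rangle_B=\E_B(a^*b)$; using the identity $\|\E_B(x\sigma_v(y))\|_2=\|\E_B(xvy)\|_2$ recorded in Definition 1.1, the displayed mixing condition says precisely that $(u_i)$ mixes the bimodule $\mathcal H$. Popa's intertwining criterion, applied to $\mathcal H$ in place of $L^2(M)$, then gives: \emph{(1) holds if and only if $\mathcal H$ contains no nonzero $B$-$B$-subbimodule that is finitely generated as a Hilbert right $B$-module} (equivalently, that embeds as a right $B$-module into $L^2(B)^{\oplus n}$ for some $n$).

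To prove $(1)\Rightarrow(2)$, let $x\in q\mathcal{N}_M^{(1)}(B)$ with $Bx\subset\sum_{i=1}^n x_iB$, and suppose $x\notin N$. Then $BxB\subset\sum_i x_iB$, so $\mathcal K:=\overline{BxB}^{\,\|\cdot\|_2}$ is contained in the image of the bounded right $B$-module map $L^2(B)^{\oplus n}\to L^2(M)$, $(b_i)_i\mapsto\sum_i x_ib_i$, hence is a $B$-$B$-subbimodule of $L^2(M)$ that is finitely generated as a right $B$-module and contains $x$. As $B\subset N$, the projection $1-e_N$ is left and right $B$-linear, so $\mathcal K_0:=\overline{(1-e_N)\mathcal K}^{\,\|\cdot\|_2}$ is a $B$-$B$-subbimodule of $\mathcal H$; being the closure of the image of $\mathcal K$ under $1-e_N$, it is finitely generated as a right $B$-module, and it is nonzero because it contains $x-\E_N(x)\ne 0$. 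This contradicts the characterization of (1) above, so $x\in N$; that is, $q\mathcal{N}_M^{(1)}(B)\subset N$.

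For the converse I argue contrapositively: if (1) fails, the characterization produces a nonzero $B$-$B$-subbimodule $\mathcal L\subset\mathcal H$ that is finitely generated as a right $B$-module, and the key point is an \emph{extraction lemma}, namely that such an $\mathcal L$ contains a nonzero element of $q\mathcal{N}_M^{(1)}(B)$. Granting it, that element lies in $M\cap\mathcal H$, so its $\E_N$-image is $0$ while it is itself nonzero, whence it does not lie in $N$ and (2) fails. To establish the extraction lemma, one uses that the projection $f\in\langle M,e_B\rangle$ onto $\mathcal L$ has $\mathrm{Tr}(f)<\infty$, so the pull-down lemma lets one approximate $fe_B$ in $L^2(\langle M,e_B\rangle,\mathrm{Tr})$ by elements $me_B$, $m\in M$; a suitable such $m$ is then a nonzero bounded vector in $\mathcal L$ for which $Bm$ is contained in a finitely generated right $B$-module, that is, $m\in q\mathcal{N}_M^{(1)}(B)$. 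I expect this passage from an abstract finitely generated subbimodule to a genuine bounded one-sided quasi-normalizer — the pull-down bookkeeping inside the basic construction — to be the main obstacle; identifying relative weak mixing with Popa's dichotomy, and the rest of the argument, are essentially formal. (One may first pass to the amplifications $B\otimes\C 1\subset N\otimes M_r(\C)\subset M\otimes M_r(\C)$ to reduce the failure of relative weak mixing to its being witnessed by a single element of $M\ominus N$, which simplifies the minimization underlying Popa's criterion.)
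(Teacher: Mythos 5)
First, a point of reference: the paper does not prove this theorem at all --- it is quoted as the main result of \cite{FGS} --- so there is no internal proof to compare against; your proposal has to stand on its own. Your framework (Popa's mixing/intertwining dichotomy for the $B$-$B$-bimodule $\mathcal H=L^2(M)\ominus L^2(N)$) is the right one and is indeed the spirit of the Fang--Gao--Smith argument, and the direction $(1)\Rightarrow(2)$ is essentially complete: $\overline{BxB}$ sits inside a right $B$-module of finite dimension, $1-e_N$ is a $B$-bimodular projection because $B\subset N$, and the resulting nonzero finite subbimodule of $\mathcal H$ contradicts relative weak mixing. (Even there you are quietly using the ``weak mixing $\Rightarrow$ no finite subbimodule'' half of the dichotomy for $L^2$-vectors rather than elements of $M$, which needs a routine but nonzero approximation argument.)

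The genuine gap is in $(2)\Rightarrow(1)$, and it is exactly where you locate it: the ``extraction lemma'' is not a formality but the entire content of the hard direction, and the mechanism you propose for it does not work. Since $\mathcal L\subset\mathcal H=L^2(M)\ominus L^2(N)$ and $L^2(B)\subset L^2(N)$, the projection $f$ onto $\mathcal L$ satisfies $fe_B=0$; approximating $fe_B$ by elements $me_B$ with $m\in M$ therefore only produces $m$ close to $0$ and cannot yield a nonzero bounded vector in $\mathcal L$. The correct route must work with $f$ itself (approximated in $L^2(\langle M,e_B\rangle,\mathrm{Tr})$ by finite sums $\sum_k x_ke_By_k$), or with a Pimsner--Popa basis of right-bounded vectors $\xi_1,\dots,\xi_n$ of $\mathcal L$, and then one still faces two conversions that your sketch elides: replacing $L^2$-vectors by genuine elements of $M$, and upgrading the closed-module containment $B\eta\subset\overline{\sum_j\xi_jB}$ to the algebraic containment $Bx\subset\sum_{i=1}^nx_iB$ with $x,x_i\in M$ that the definition of $q\mathcal N_M^{(1)}(B)$ requires --- all while preserving orthogonality to $L^2(N)$ so that the element produced is not in $N$. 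This bookkeeping occupies several lemmas in \cite{FGS} and cannot be waved through; as it stands, the converse direction is asserted rather than proved.
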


The next technical result is inspired by Proposition 4.1 of \cite{Popa83} and Lemma 2.1 of \cite{RSS}; it reminds also heredity properties of relative weak mixing from \cite{FGS}.

\begin{proposition}
Let $B\subset N\subset M$ be a triple as above.
\begin{enumerate}
\item [(1)] Assume that $B$ is weakly mixing in $M$ relative to $N$. Then for every nonzero projection $e\in B$, the reduced algebra $eBe$ is weakly mixing in $eMe$ relative to $eNe$. Moreover, 
one has for every $u\in U(M)$:
$$
\Vert \E_{B}-\E_{uBu^*}\Vert_{\infty,2}\geq \Vert u-\E_{N}(u)\Vert_2.
$$
\item [(2)] If $B$ is strongly mixing in $M$ relative to $N$, then for every diffuse unital von Neumann subalgebra $D$ of $B$, one has for every $u\in U(M)$:
$$
\Vert \E_D-\E_{uDu^*}\Vert_{\infty,2}\geq \Vert u-\E_{N}(u)\Vert_2.
$$
\item [(3)] If $B_i\subset N_i\subset M_i$ are triples of finite von Neumann algebras, $i=1,2$, and if $B_i$ is weakly mixing in $M_i$ relative to $N_i$, then $B_1\overline{\otimes}B_2$ is weakly mixing in $M_1\overline{\otimes}M_2$ relative to $N_1\overline{\otimes}N_2$.
\end{enumerate}
\end{proposition}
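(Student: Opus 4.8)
The plan is to treat the three assertions in turn, using throughout the equivalences recorded in Remark 1.3 and Theorem 1.4, which let me pass freely between relative weak mixing, the relative weak asymptotic homomorphism property, and the inclusion $q\mathcal{N}^{(1)}_{M}(B)\subseteq N$. For the first assertion of (1) the essential points are that $\E_{eBe}$ is the restriction of $\E_B$ to $eMe$ and that $eMe\ominus eNe=\{z\in eMe:\E_N(z)=0\}\subseteq M\ominus N$, since $e\in N$. By Theorem 1.4 it then suffices to show $q\mathcal{N}^{(1)}_{eMe}(eBe)\subseteq eNe$, i.e. the heredity of relative weak mixing under compression by a projection; this is one of the heredity properties established in \cite{FGS}, so I would simply invoke it. (Concretely: if $x=exe$ and $(eBe)x\subseteq\sum_i x_i(eBe)$ one shows $x\in q\mathcal{N}^{(1)}_{M}(B)$, hence $x\in N$ and $x=exe\in eNe$; one also checks that a finite-dimensional direct summand of $B$ forces the corresponding corner of $M$ to coincide with that of $N$, so that no diffuseness of $B$ is needed.)

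The heart of the matter is the inequality in (1). Fix $u\in U(M)$ and any $v\in U(B)$. Since $v\in B$, one has $\E_{uBu^*}(uvu^*)=u\E_B(v)u^*=uvu^*$, hence
$$
\Vert(\E_B-\E_{uBu^*})(uvu^*)\Vert_2=\Vert uvu^*-\E_B(uvu^*)\Vert_2=\bigl(1-\Vert\E_B(uvu^*)\Vert_2^2\bigr)^{1/2},
$$
the last equality by the Pythagorean identity in $L^2(M)$ and $\Vert uvu^*\Vert_2=1$. As $\Vert uvu^*\Vert_\infty=1$, this gives $\Vert\E_B-\E_{uBu^*}\Vert_{\infty,2}^2\ge 1-\Vert\E_B(uvu^*)\Vert_2^2$ for every $v\in U(B)$, so it is enough to prove $\inf_{v\in U(B)}\Vert\E_B(uvu^*)\Vert_2\le\Vert\E_N(u)\Vert_2$; this then yields $\Vert\E_B-\E_{uBu^*}\Vert_{\infty,2}^2\ge 1-\Vert\E_N(u)\Vert_2^2=\Vert u-\E_N(u)\Vert_2^2$. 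Write $a=\E_N(u)\in N$ and $x=u-a\in M\ominus N$. For $v\in U(B)\subseteq U(N)$, bimodularity of $\E_N$ over $N$ gives $\E_N(avx^*)=av\E_N(x^*)=0$ and $\E_N(xva^*)=\E_N(x)va^*=0$, whence $\E_B(uvu^*)=\E_B(ava^*)+\E_B(xvx^*)$. If $x=0$ there is nothing to prove; otherwise, given $\varepsilon>0$, weak mixing of $B$ in $M$ relative to $N$ applied to $F=\{x,x^*\}$ produces $v\in U(B)$ with $\Vert\E_B(xvx^*)\Vert_2<\varepsilon$, so that $\Vert\E_B(uvu^*)\Vert_2\le\Vert ava^*\Vert_2+\varepsilon\le\Vert a\Vert_\infty\Vert a\Vert_2+\varepsilon\le\Vert\E_N(u)\Vert_2+\varepsilon$, using $\Vert\E_N(u)\Vert_\infty\le\Vert u\Vert_\infty=1$. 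Letting $\varepsilon\to0$ completes (1).

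Assertion (2) is then immediate: by Remark 1.3(2) a diffuse $D\subseteq B$ is again strongly mixing in $M$ relative to $N$, hence weakly mixing in $M$ relative to $N$ by Remark 1.3(1), so the inequality of (1) applied to $D$ in place of $B$ gives $\Vert\E_D-\E_{uDu^*}\Vert_{\infty,2}\ge\Vert u-\E_N(u)\Vert_2$ for all $u\in U(M)$. For assertion (3) I would verify the relative weak asymptotic homomorphism property for $B_1\overline{\otimes}B_2\subseteq N_1\overline{\otimes}N_2\subseteq M_1\overline{\otimes}M_2$ by means of a product net. Let $(u^{(i)}_\alpha)_{\alpha\in I_i}\subseteq U(B_i)$ witness the property for $B_i\subseteq N_i\subseteq M_i$, and set $w_{(\alpha_1,\alpha_2)}=u^{(1)}_{\alpha_1}\otimes u^{(2)}_{\alpha_2}\in U(B_1\overline{\otimes}B_2)$, indexed by $I_1\times I_2$ with the product order. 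Using $\E_{B_1\overline{\otimes}B_2}=\E_{B_1}\overline{\otimes}\E_{B_2}$ and $\E:=\E_{N_1\overline{\otimes}N_2}=\E_{N_1}\overline{\otimes}\E_{N_2}$, one finds for elementary tensors $a=a_1\otimes a_2$, $b=b_1\otimes b_2$ that
$$
\E_{B_1\overline{\otimes}B_2}(a\,w_{(\alpha_1,\alpha_2)}\,b)-\E_{B_1\overline{\otimes}B_2}\bigl(\E(a)\,w_{(\alpha_1,\alpha_2)}\,\E(b)\bigr)=(P_1-Q_1)\otimes P_2+Q_1\otimes(P_2-Q_2),
$$
with $P_i=\E_{B_i}(a_iu^{(i)}_{\alpha_i}b_i)$ and $Q_i=\E_{B_i}(\E_{N_i}(a_i)u^{(i)}_{\alpha_i}\E_{N_i}(b_i))$. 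Since $\Vert P_2\Vert_2\le\Vert a_2\Vert_\infty\Vert b_2\Vert_\infty$, $\Vert Q_1\Vert_2\le\Vert a_1\Vert_\infty\Vert b_1\Vert_\infty$, $\Vert P_i-Q_i\Vert_2\to0$ by hypothesis, and $\Vert\xi\otimes\eta\Vert_2=\Vert\xi\Vert_2\Vert\eta\Vert_2$, the right-hand side tends to $0$ in $\Vert\cdot\Vert_2$. By bilinearity this persists for finite sums of elementary tensors, and then for arbitrary $a,b\in M_1\overline{\otimes}M_2$ by an $\varepsilon/3$-approximation, based on the bounds $\Vert\E_{B_1\overline{\otimes}B_2}((a-a')\,w\,b)\Vert_2\le\Vert a-a'\Vert_2\Vert b\Vert_\infty$ and $\Vert\E_{B_1\overline{\otimes}B_2}(\E(a-a')\,w\,\E(b))\Vert_2\le\Vert a-a'\Vert_2\Vert b\Vert_\infty$ (and the analogous ones in the second variable), all uniform in the net parameter.

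The only step that is not routine manipulation of conditional expectations is the first assertion of (1): rather than reprove the compression-heredity of relative weak mixing, I would cite \cite{FGS}. Once that is granted, the inequality in (1) is the short computation above, (2) is a one-line consequence of Remark 1.3, and (3) is the standard product-net argument together with the approximation by elementary tensors.
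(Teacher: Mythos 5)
Your proof is correct, and the key estimate is the same one the paper uses: the Pythagorean identity $\Vert w-\E_B(w)\Vert_2^2=1-\Vert\E_B(w)\Vert_2^2$ for a unitary $w$, combined with the orthogonality identity of Remark 1.3(3) and a unitary $v\in U(B)$ killing the cross term $\E_B\bigl((u-\E_N(u))v(u-\E_N(u))^*\bigr)$. The differences are organizational. The paper proves the inequality of (2) directly from \emph{strong} mixing (choosing a weakly null sequence $(v_n)\subset U(D)$ and testing $\E_D-\E_{uDu^*}$ on $v_n$, i.e.\ on $u^*v_nu$ after conjugation) and then asserts that the inequality in (1) is proved "similarly"; you reverse the logic, proving the inequality in (1) directly from weak mixing (testing on $uvu^*$) and then obtaining (2) as a one-line corollary via the heredity statements of Remark 1.3(1)--(2). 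Your order is arguably tidier, since it makes explicit that the $u$-versus-$u^*$ asymmetry is immaterial and that (2) genuinely reduces to (1). For part (3) you give a self-contained product-net argument (elementary tensors, the telescoping decomposition $(P_1-Q_1)\otimes P_2+Q_1\otimes(P_2-Q_2)$, then bilinearity and a Kaplansky-type $\varepsilon/3$ approximation), where the paper simply cites Proposition 6.1 of \cite{FGS}; your argument is correct and has the merit of being verifiable without the reference. For the first assertion of (1) you, like the paper, defer to \cite{FGS} (Corollary 6.3 there); your parenthetical sketch via one-sided quasi-normalizers is not a complete proof (passing from $(eBe)x\subset\sum_i x_i(eBe)$ to membership in $q\mathcal{N}_M^{(1)}(B)$ needs the central-support argument of \cite{FGS}), but since you explicitly invoke the reference this is on the same footing as the paper's own treatment.
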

\begin{proof} The first part of claim (1) is a straightforward consequence of Corollary 6.3 of \cite{FGS} and claim (3) follows from Proposition 6.1 of the same article.
\par
We prove claim (2) because the proof of the last assertion in (1) is similar to that of statement (2). 
\par
Thus fix a diffuse von Neumann subalgebra $D$ of $B$, 
a unitary operator $u\in U(M)$, and let us consider $x=u^*-\E_{N}(u^*)$ and $y=u-\E_{N}(u)\in M\ominus N$ and let $\varepsilon>0$. By the above remark, one has for every $v\in U(D)$:
$$
\E_{D}(xvy)  = 
\E_{D}(u^*vu)-\E_{D}(\E_{N}(u^*)v\E_{N}(u))
$$
As $D$ is diffuse, there exists a sequence of unitaries $(v_n)\subset U(D)$ which converges to 0 with respect to the weak operator topology.
Since $B$ is strongly mixing in $M$ relative to $N$, there exists a positive integer $n$ such that $\Vert \E_{B}(xv_ny)\Vert_2\leq \varepsilon$. As $\E_D=\E_D\E_{B}$, we have 
$\Vert \E_{D}(xv_ny)\Vert_2\leq \varepsilon$ as well.
The above computations give
$$
\Vert \E_{D}(u^*v_nu)\Vert_2\leq \Vert \E_{D}(\E_{N}(u^*)v_n\E_{N}(u))\Vert +\varepsilon
\leq \Vert \E_{N}(u)\Vert_2 +\varepsilon.
$$
We get then:
\begin{eqnarray*}
\Vert \E_{D}-\E_{uDu^*}\Vert_{\infty,2}^2 & \geq &
\Vert v_n-\E_{uDu^*}(v_n)\Vert_2^2\\
&=& \Vert u^*v_nu-\E_{D}(u^*v_nu)\Vert_2^2\\
&=&
1-\Vert \E_{D}(u^*v_nu)\Vert_2^2\\
&\geq &
1-(\Vert \E_{N}(u)\Vert_2+\varepsilon)^2\\
&=&
1-\Vert \E_{N}(u)\Vert_2^2-2\varepsilon\Vert \E_{N}(u)\Vert_2-\varepsilon^2\\
&=&
\Vert u-\E_{N}(u)\Vert_2^2-2\varepsilon\Vert \E_{N}(u)\Vert_2-\varepsilon^2.
\end{eqnarray*} 
As $\varepsilon$ is arbitrary, we get the conclusion.
\end{proof}

We end the present section with a first class of examples of relative strongly mixing algebras; its proof is inspired by that of Lemma 2.2 in \cite{DSS}.

\begin{proposition}
Let $1\in B\subset N$ and $Q$ be arbitrary finite von Neumann algebras with separable preduals. Assume moreover that $B$ is diffuse.
Then $B$ is strongly mixing relative to $N$ in the free product algebra $M=N*Q$.
\end{proposition}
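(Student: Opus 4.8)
The plan is to use Voiculescu's orthogonal decomposition of $L^2(M)$ attached to the free product $M=N*Q$ and to reduce the statement to an elementary computation with reduced words. Write $N^{\circ}=N\ominus\C 1$ and $Q^{\circ}=Q\ominus\C 1$; recall that $L^2(M)$ is the orthogonal sum of $\C 1$ and of the closed linear spans of the \emph{reduced words} $a_1a_2\cdots a_k$ ($k\geq 1$), where the letters $a_j$ lie alternately in the $L^2$-closures of $N^{\circ}$ and $Q^{\circ}$. Since $L^2(N)=\C 1\oplus\overline{N^{\circ}}$, the space $M\ominus N$ is densely spanned by those reduced words that are \emph{not} a single letter of $N^{\circ}$, that is, by the single letters of $Q^{\circ}$ together with all reduced words of length $\geq 2$; in particular, a reduced word of $M\ominus N$ whose first or last letter lies in $N^{\circ}$ automatically has length $\geq 2$. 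The first step is then the standard reduction to the case where $x$ and $y$ are single reduced words of $M\ominus N$: the $*$-algebra $\mathcal A_0$ generated by $N\cup Q$ is $\|\cdot\|_2$-dense in $M$ and satisfies $\E_N(\mathcal A_0)\subset N\subset\mathcal A_0$, so the (necessarily bounded) elements of $\mathcal A_0\cap(M\ominus N)$ are finite linear combinations of reduced words of $M\ominus N$ and are $\|\cdot\|_2$-dense in $M\ominus N$; together with the bilinearity of $(x,y)\mapsto\E_B(xu_ny)$ and the estimates $\|\E_B(azb)\|_2\leq\min(\|a\|_2\|b\|_{\infty},\|a\|_{\infty}\|b\|_2)$ (valid for $z\in U(B)$), a routine $\varepsilon/3$-argument completes the reduction.

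So fix reduced words $x,y\in M\ominus N$ and a sequence $(u_n)\subset U(B)$ with $u_n\to 0$ weakly; note this forces $\tau(wu_n)\to 0$ for every $w\in M$, since $\tau(wu_n)=\tau(\E_B(w)u_n)$. The key step will be to prove that
$$
xu_ny=c_n\,g+w_n ,
$$
where $c_n\to 0$ is a scalar, $g\in M$ is a fixed bounded element that does not depend on $n$, and $w_n$ is a reduced word of length $\geq 2$. To obtain this one looks at the last letter $a$ of $x$ and the first letter $b$ of $y$ and distinguishes the four cases according to whether each of $a,b$ belongs to $N^{\circ}$ or to $Q^{\circ}$. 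When $a\in Q^{\circ}$ (resp.\ $b\in Q^{\circ}$) nothing adjacent to $u_n$ on that side needs to be collapsed; when $a\in N^{\circ}$ --- in which case $x$ has length $\geq 2$ and one writes $x=x_1a$ with $x_1$ ending in a $Q^{\circ}$-letter --- the letter $a$ is absorbed into $u_n$, and symmetrically for $b$. In each case the resulting central element ($u_n$, $au_n$, $u_nb$, or $au_nb$) lies in $N$; splitting off its scalar part gives the term $c_ng$ with $c_n=\tau(wu_n)\to 0$ for a fixed $w\in\{1,a,b,ba\}$, and leaves a reduced word $w_n$ whose central letter, an element of $N^{\circ}$, is flanked on both sides by letters of $Q^{\circ}$, hence is genuinely reduced and of length $\geq 2$.

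To finish, since $w_n$ is a reduced word of length $\geq 2$ it is orthogonal to $L^2(N)$, so $\E_N(w_n)=0$ and $\E_N(xu_ny)=c_n\E_N(g)$, whence
$$
\|\E_N(xu_ny)\|_2=|c_n|\,\|\E_N(g)\|_2\leq|c_n|\,\|g\|_2\longrightarrow 0 .
$$
As $B\subset N$, the tower property yields $\E_B=\E_B\circ\E_N$, and therefore $\|\E_B(xu_ny)\|_2\leq\|\E_N(xu_ny)\|_2\to 0$, which is precisely the relative strong mixing of $B$ in $M=N*Q$ with respect to $N$. I expect the only genuine obstacle to be the bookkeeping in the four-case analysis: keeping track of the alternation pattern so that $w_n$ really is reduced of length $\geq 2$, and disposing of the degenerate short-word cases --- which is exactly where the remark that a reduced word of $M\ominus N$ beginning or ending with an $N^{\circ}$-letter has length $\geq 2$ is used. (It is worth noting that the argument in fact gives $\|\E_N(xu_ny)\|_2\to 0$ for \emph{every} sequence $u_n\to 0$ weakly in $N$, so neither the diffuseness of $B$ nor the hypothesis $u_n\in U(B)$ is really needed, only $\E_B=\E_B\circ\E_N$.)
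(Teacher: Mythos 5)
Your proof is correct and follows essentially the same route as the paper's: both decompose $x$ and $y$ into reduced words, absorb the adjacent $N$-letters into $u_n$, split off the scalar $\tau(\cdot)$-part (which tends to $0$ by weak convergence of $u_n$), and observe that the remaining genuinely reduced word is annihilated by the conditional expectation. The only differences are cosmetic: you make explicit the $\Vert\cdot\Vert_2$-density reduction that the paper leaves implicit, and you factor through $\E_N$ via $\E_B=\E_B\circ\E_N$ rather than estimating $\E_B(xu_ny)$ directly.
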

\begin{proof} Let us recall that the free product $N*Q$ is the von Neumann algebra generated by the unital $*$-algebra
$$
P:=\C1\oplus \bigoplus_{n\geq 1}\left(\bigoplus_{i_1\not=\cdots\not=i_n}M_{i_1}^0\otimes \cdots
\otimes M_{i_n}^0\right)
$$
where $M_1^0=\{x\in N:\tau(x)=0\}$ and $M_2^0=\{x\in Q:\tau(x)=0\}$. (We have chosen and fixed finite, normal, faithful normalized traces on $N$ and $Q$.) Thus every element of $P$ is a finite linear combination of words $w$ of the following form: either $w\in N$, or $w$ is a finite product of letters of zero trace and at least one letter belongs to $Q$. 
\par
Then fix a sequence $(u_n)\subset U(B)$ which converges weakly to zero and two words $x,y\in P$ as above. If $x,y\in N$, then $\E_{B}(xu_ny)-\E_{B}(\E_{N}(x)u_n\E_{N}(y))=0$. If $x,y\in Q^0$, then $x,y\in M\ominus N$ and
$xu_ny$ decomposes as a sum
$xu_ny=x(u_n-\tau(u_n))y+\tau(u_n)xy$ where the first term is a reduced word, hence 
$\E_{B}(x(u_n-\tau(u_n))y)=0$, and 
$$
\Vert \E_{B}(xu_ny)\Vert_2\leq |\tau(u_n)|\Vert xy\Vert_2\to 0
$$
as $n\to\infty$. If $x=x_1b_1$ and $y=b_2y_2$ where $b_1,b_2\in N$ and $x_1$ ends with an element in $Q^0$ and $y_2$ starts with an element in $Q^0$, then
\begin{eqnarray*}
\E_{B}(xu_ny)-\E_{B}(\E_{N}(x)u_n\E_{N}(y)) & = &
\E_{B}(x_1b_1u_nb_2y_2)\\
& &
-\E_{B}(\E_{N}(x_1)b_1u_nb_2\E_{N}(y_2))\\
& = &
\E_{B}(x_1\{b_1u_nb_2-\tau(b_1u_nb_2)\}y_2)\\
&  &
+\tau(b_1u_nb_2)\E_{B}(x_1y_2)\\
&= & \tau(b_1u_nb_2)\E_{B}(x_1y_2).
\end{eqnarray*} 
As the words $x_1\{b_1u_nb_2-\tau(b_2u_nb_1)\}y_2$, $x_1$ and $y_2$ are reduced, they belong to the kernels of the conditional expectations $\E_{B}$ and $\E_{N}$. Hence we get
$$
\Vert \E_{B}(xu_ny)-\E_{B}(\E_{N}(x)u_n\E_{N}(y))\Vert_2\leq |\tau(b_1u_nb_2)|\Vert x_1y_2\Vert
\to 0
$$
as $n\to\infty$. The remaining case, when exactly one of $x$ and $y^*$ ends with a letter from $Q$, is dealt with as in the previous case.
\end{proof}

\section{The case of group algebras}

As will be seen below, the associated von Neumann algebras of suitable triples of groups $H<K<G$ give rise to examples of von Neumann algebras which satisfy the relative weak or strong mixing properties.
Our next definition generalizes the so-called \emph{conditions (SS)} and \emph{(ST)} of \cite{JS} to not necessarily abelian groups. We are indebted to the referee for having suggested the following more intuitive formulation. 

\begin{definition}
Let $G$ be a countable group and let $H<K$ be two infinite subgroups of $G$.
\begin{enumerate}
\item [(a)] We say that the triple $H<K<G$ satisfies \textbf{condition (SS)} if all orbits of the natural action $H\curvearrowright (G\setminus K)/H$ are infinite.
\item [(b)] The triple $H<K<G$ satisfies \textbf{condition (ST)} if all stabilizers of the natural action $H\curvearrowright (G\setminus K)/H$ are finite.
\end{enumerate}
When $H<G$ we say that the pair $H<G$ satisfies condition (SS) (resp. (ST)) if it is the case for $H=K<G$.
\end{definition}

We present below technical characterizations of both properties. In order to do that,
recall on the one hand that all orbits of $H\curvearrowright (G\setminus K)/H$ 
are infinite if and only if, for every finite subset $Y\subset G\setminus K$, one can find $h\in H$ such that $hY\cap Y=\emptyset$ (see for instance Lemma 2.2 in \cite{KT}).
\par
On the other hand, for $g,h\in G\setminus K$, set 
$$
E(g,h)=\{\gamma\in H: g\gamma h\in H\}=g^{-1}Hh^{-1}\cap H
$$
and $E(g)=E(g^{-1},g)=gHg^{-1}\cap H$. The latter is a subgroup of $G$. Then it is easy to see that, for arbitrary $\gamma_0\in E(g,h)$, one has $E(g,h)\subset E(g^{-1})\gamma_0$. 
\par 
Making use of these observations, the proof of the following lemma is straightforward.

\begin{lemma}
Let $H<K<G$ be three infinite, countable groups. 
\begin{enumerate}
\item[(a)] The triple $H<K<G$ satisfies condition (SS) if and only if for every nonempty finite set $F\subset G\setminus K$, there exists $h\in H$ such that $FhF\cap H=\emptyset$.
\item[(b)] The following conditions are equivalent:
\begin{enumerate}
\item [(1)] $H<K<G$ satisfies condition (ST);
\item [(2)] for every finite set $F\subset G\setminus K$, there exists an exceptional finite set $E\subset H$ such that $FhF\cap H=\emptyset$ for every $h\in H\setminus E$.
\item [(3)] $E(g,h)$ is a finite set for all $g,h\in G\setminus K$;
\item [(4)] $E(g)$ is a finite group for every $g\in G\setminus K$.
\end{enumerate}
\end{enumerate}
\end{lemma}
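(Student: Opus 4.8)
The plan is to prove both parts of Lemma 2.2 by translating the orbit-theoretic conditions on the action $H\curvearrowright (G\setminus K)/H$ into statements about double cosets, and then into the ``separation'' statements about sets $FhF$. I will use freely the two observations recorded just before the lemma: first, that all orbits of $H\curvearrowright (G\setminus K)/H$ are infinite if and only if for every finite $Y\subset G\setminus K$ there is $h\in H$ with $hY\cap Y=\emptyset$ (Lemma 2.2 of \cite{KT}); and second, that for $g,h\in G\setminus K$ the set $E(g,h)=g^{-1}Hh^{-1}\cap H$ is, whenever nonempty, contained in a single right coset $E(g^{-1})\gamma_0$ of the subgroup $E(g^{-1})=g^{-1}Hg\cap H$.

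For part (a), the key identity is that for $h\in H$ and a finite set $F\subset G\setminus K$, the condition $FhF\cap H=\emptyset$ says precisely that $f_1 h f_2\notin H$ for all $f_1,f_2\in F$, i.e. $h\notin f_1^{-1}Hf_2^{-1}=E(f_1,f_2)^{-1}$-type sets; more usefully, I rewrite it in terms of the cosets $Hf_2^{-1}$ inside $(G\setminus K)/H$ or $H\backslash(G\setminus K)$. Concretely, $f_1hf_2\in H$ is equivalent to $h\,(f_2 H)=f_1^{-1}H$ as left cosets, so given the finite set $Y=\{f_1^{-1}H,f_2H:f_1,f_2\in F\}$ of $H$-cosets (all landing in $(G\setminus K)/H$ since $F\subset G\setminus K$ and $K$ is a subgroup), the existence of $h$ with $FhF\cap H=\emptyset$ is exactly the statement that $h$ moves every coset in the $f_2$-part off every coset in the $f_1^{-1}$-part. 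Applying the quoted $hY\cap Y=\emptyset$ characterization to this enlarged finite set $Y$ gives one direction; conversely, from $FhF\cap H=\emptyset$ for all finite $F$ one recovers $hY\cap Y=\emptyset$ for arbitrary finite $Y\subset (G\setminus K)/H$ by choosing coset representatives in $G\setminus K$ and putting them into $F$. One must be a little careful that coset representatives can be taken in $G\setminus K$, which holds because each such coset is contained in $G\setminus K$.

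For part (b), the implications (3)$\Leftrightarrow$(4) are immediate from $E(g)=E(g^{-1},g)$ together with the observation that any nonempty $E(g,h)$ sits inside one coset of the group $E(g^{-1})$, so finiteness of all the $E(g)$ forces finiteness of all the $E(g,h)$ and conversely. For (1)$\Leftrightarrow$(3): a stabilizer of the action $H\curvearrowright (G\setminus K)/H$ at the coset $gH$ is exactly $\{h\in H: hgH=gH\}=H\cap gHg^{-1}=E(g^{-1})$, so condition (ST) (all stabilizers finite) is the same as all $E(g)$ being finite, which by the equivalence just noted is (3). Finally (2) is the quantitative form of (ST): unwinding $FhF\cap H$ as above shows $h$ lies in $FhF\cap H=\emptyset$ fails exactly when $h\in E(f_1,f_2)^{-1}$-data for some $f_1,f_2\in F$; since there are finitely many pairs $(f_1,f_2)$ and each corresponding exceptional set is finite under (3) (being contained in a coset of a finite group), the union $E$ is a finite exceptional set, giving (3)$\Rightarrow$(2); and (2)$\Rightarrow$(1) follows since a finite stabilizer statement is recovered by specializing $F$ to contain representatives witnessing a given coset, exactly as in part (a).

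The only genuinely delicate point—the ``main obstacle,'' though it is minor—is bookkeeping the passage between the three languages (orbits/stabilizers of the action, double cosets $HgH$, and the combinatorial sets $FhF\cap H$), in particular making sure that the finite set $Y$ of cosets built from $F$ genuinely lies in $(G\setminus K)/H$ and that representatives can be chosen in $G\setminus K$; this uses only that $K$ is a subgroup, so $G\setminus K$ is a union of $H$-cosets on both sides. Once the dictionary is set up, everything reduces to the cited Lemma 2.2 of \cite{KT} and to the elementary coset-containment observation for $E(g,h)$ stated in the text, so no further computation is needed.
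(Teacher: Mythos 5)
Your argument is correct and follows exactly the route the paper intends: the paper gives no written proof beyond pointing to the two observations preceding the lemma (the $hY\cap Y=\emptyset$ characterization of infinite orbits from \cite{KT}, and the containment $E(g,h)\subset E(g^{-1})\gamma_0$), and your dictionary between $FhF\cap H=\emptyset$, the coset identity $hf_2H=f_1^{-1}H$, and the sets $E(f_1,f_2)$ is precisely the ``straightforward'' verification being left to the reader. The only blemishes are notational: the stabilizer of $gH$ is $H\cap gHg^{-1}=E(g)$ rather than $E(g^{-1})$ in the paper's convention (harmless, since $G\setminus K$ is closed under inversion), and the set of exceptional $h$ for a pair $(f_1,f_2)$ is $E(f_1,f_2)$ itself, with no inverse needed.
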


Using the same type of arguments as in \cite{JS}, one proves the following generalization of Proposition 2.3 and of Theorem 3.5 in \cite{JS}:

\begin{proposition}
Let $H<K<G$ be a triple of countable groups. Then:
\begin{enumerate}
\item [(a)] The triple $H<K<G$ satisfies condition (SS) if and only if $L(H)$ is weakly mixing in $L(G)$ relative to $L(K)$. 
\item [(b)] The triple $H<K<G$ satisfies condition (ST) if and only if $L(H)$ is strongly mixing in $L(G)$ relative to $L(K)$.
\end{enumerate}
Moreover, if $H<K<G$ satisfies condition (SS) (resp. (ST)) and if $\sigma:G\rightarrow \mathrm{Aut}(Q,\tau)$ is a trace-preserving action of $G$ on some finite von Neumann algebra $(Q,\tau)$, then the crossed product algebra $Q\rtimes_\sigma H$ is weakly (resp. strongly) mixing in $Q\rtimes_\sigma G$ relative to $Q\rtimes_\sigma K$.
\end{proposition}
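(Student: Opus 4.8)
\emph{The plan} is to reduce everything to the Fourier calculus in $L(G)$ together with the two combinatorial reformulations furnished by Lemma 2.2. The first step is a reduction to finitely supported arguments: given $x,y\in L(G)\ominus L(K)$, Kaplansky density lets one approximate them in $\Vert\cdot\Vert_2$ by finite linear combinations of the $\lambda_g$ of comparable operator norm, and subtracting $\E_{L(K)}$ (a $\Vert\cdot\Vert_2$-contraction) keeps them in $L(G)\ominus L(K)$; since $\Vert v\Vert_\infty=1$ for $v$ unitary, the error this introduces in $\Vert\E_{L(H)}(xvy)\Vert_2$ is uniform in $v$. Hence in all the quantitative statements one may assume $x=\sum_{g\in S}x(g)\lambda_g$ and $y=\sum_{g'\in S'}y(g')\lambda_{g'}$ with $S,S'\subset G\setminus K$ finite. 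Everything is then governed by the identity, valid for $v=\sum_h v(h)\lambda_h\in U(L(H))$,
$$
\E_{L(H)}(\lambda_g v\lambda_{g'})=\sum_{h:\,ghg'\in H}v(h)\lambda_{ghg'},\qquad\Vert\E_{L(H)}(\lambda_g v\lambda_{g'})\Vert_2^2=\sum_{h\in E(g,g')}\abs{v(h)}^2,
$$
with $E(g,g')=g^{-1}Hg'^{-1}\cap H$ as in the text.

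\emph{Sufficiency.} If (SS) holds, Lemma 2.2(a) applied to the finite set $S\cup S'\subset G\setminus K$ gives $h\in H$ with $(S\cup S')h(S\cup S')\cap H=\emptyset$, so $\E_{L(H)}(x\lambda_h y)=0$ and $v=\lambda_h$ works once the approximation error is absorbed into $\varepsilon$. If (ST) holds, then $E_0:=\bigcup_{g\in S,\,g'\in S'}E(g,g')$ is finite by Lemma 2.2(b)(3); for any sequence $(u_n)\subset U(L(H))$ converging weakly to $0$ one has $u_n(h)=\tau(u_n\lambda_h^{-1})\to 0$ for each fixed $h$, hence $\sum_{h\in E_0}\abs{u_n(h)}^2\to 0$, and the second identity above forces $\Vert\E_{L(H)}(xu_ny)\Vert_2\to 0$. (The diffuseness of $L(H)$ demanded in the definition of relative strong mixing is automatic, $H$ being infinite.)

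\emph{Necessity.} If (SS) fails, Lemma 2.2(a) yields a finite nonempty $F_0\subset G\setminus K$ with $F_0hF_0\cap H\neq\emptyset$ for every $h\in H$. Summing the first identity over $F_0\times F_0$ and regrouping by $h$,
$$
\sum_{g,g'\in F_0}\Vert\E_{L(H)}(\lambda_g v\lambda_{g'})\Vert_2^2=\sum_{h\in H}\abs{v(h)}^2\,\#\{(g,g')\in F_0\times F_0:ghg'\in H\}\ge\Vert v\Vert_2^2=1
$$
for every $v\in U(L(H))$, so $\max_{g,g'\in F_0}\Vert\E_{L(H)}(\lambda_g v\lambda_{g'})\Vert_2\ge 1/\abs{F_0}$; thus the finite set $\{\lambda_g:g\in F_0\}\subset L(G)\ominus L(K)$ witnesses the failure of relative weak mixing. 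If (ST) fails, Lemma 2.2(b)(4) gives $g\in G\setminus K$ with $E(g)=gHg^{-1}\cap H$ infinite; enumerating it as $\{h_n\}$, the unitaries $u_n:=\lambda_{h_n}\in U(L(H))$ converge weakly to $0$, while $g^{-1}h_ng\in H$ gives $\E_{L(H)}(\lambda_{g^{-1}}u_n\lambda_g)=\lambda_{g^{-1}h_ng}$ of $\Vert\cdot\Vert_2$-norm $1$, so $\lambda_{g^{-1}},\lambda_g\in L(G)\ominus L(K)$ witness the failure of relative strong mixing.

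\emph{The crossed products.} For the final clause I would rerun the computations with the expansion $x=\sum_g a_gu_g$ ($a_g\in Q$) of elements of $M=Q\rtimes_\sigma G$, the conditional expectation onto $Q\rtimes_\sigma K$ retaining the terms with $g\in K$, and the rule $au_g\cdot bu_{g'}=a\,\sigma_g(b)\,u_{gg'}$. The relative weak mixing statement passes through verbatim: for $h$ chosen as above, the support condition makes $\E_{Q\rtimes_\sigma H}(x''u_hy'')$ vanish regardless of the $Q$-coefficients, so (SS) still suffices. The relative strong mixing statement is the subtle one, and I expect it to be the main obstacle: finiteness of $E(g,g')$ under (ST) still confines the sum representing $\E_{Q\rtimes_\sigma H}(xu_ny)$ to finitely many group elements $h$, but one must then propagate the weak convergence of $(u_n)$ through the $Q$-valued Fourier coefficients sitting at those $h$; the estimate of the previous paragraph only yields decay for coefficients that are scalar — i.e.\ when the expectation at play is onto $L(H)$ — so this is precisely the point at which the argument must be organised with care, and where one must be attentive to which subalgebra plays the role of $B$ in Definition 1.1(2).
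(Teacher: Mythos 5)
Your proofs of parts (a) and (b) are correct and are, in substance, exactly the argument the paper intends: the paper gives no proof of this proposition, deferring to ``the same type of arguments as in \cite{JS}'', and those arguments are precisely your Fourier-coefficient identity $\Vert\E_{L(H)}(\lambda_g v\lambda_{g'})\Vert_2^2=\sum_{h\in E(g,g')}\abs{v(h)}^2$ combined with the reformulations of (SS) and (ST) from Lemma 2.2 and a Kaplansky-type $\Vert\cdot\Vert_2$-approximation by finitely supported elements. All four implications are sound, including the two you had to invent for necessity: the averaging bound $\sum_{g,g'\in F_0}\Vert\E_{L(H)}(\lambda_g v\lambda_{g'})\Vert_2^2\geq\Vert v\Vert_2^2=1$ when (SS) fails, and the weakly null sequence $\lambda_{h_n}$ with $h_n\in gHg^{-1}\cap H$ when (ST) fails. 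The crossed-product clause for (SS) is also correctly handled, since the quantifier there is existential and one may simply take the unitary $u_h$ for the $h$ produced by Lemma 2.2(a).

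The one gap is the crossed-product clause for (ST), which you leave open --- and your diagnosis of where the difficulty sits is exactly right, to the point that the clause is false as literally stated. Take $Q$ diffuse, $g\in G\setminus K$, and a sequence $(q_n)\subset U(Q)$ converging weakly to $0$; then $(q_n)$ is a weakly null sequence of unitaries in $B=Q\rtimes_\sigma H$, the elements $x=u_g$ and $y=u_{g^{-1}}$ lie in $M\ominus N$, and $\E_B(xq_ny)=\E_B(\sigma_g(q_n)u_gu_{g^{-1}})=\sigma_g(q_n)$ has $\Vert\cdot\Vert_2$-norm equal to $1$ for every $n$. So Definition 1.1(2) applied verbatim to $B=Q\rtimes_\sigma H$ fails whenever $Q$ is diffuse and $K\neq G$: weak convergence of $u_n=\sum_h b_{n,h}u_h$ gives only weak convergence of the $Q$-valued coefficients $b_{n,h}=\E_Q(u_nu_h^*)$, not $\Vert b_{n,h}\Vert_2\to 0$, which is precisely the propagation problem you identified. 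The statement does hold if one restricts the test sequences to unitaries of the copy of $L(H)$ generated by $\{u_h\}_{h\in H}$ (scalar coefficients, so $\abs{c_{n,h}}\to 0$ for each $h$, and finiteness of $\bigcup_{g,g'}E(g,g')$ finishes the proof as in part (b)), or if $Q$ admits no weakly null sequence of unitaries (e.g.\ $Q$ finite-dimensional). You should therefore either record the clause in that corrected form or flag it as an error in the paper; your instinct not to claim it as stated was the right one.
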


\begin{remark} (1) An infinite subgroup $H$ of a group $G$ is called \textit{malnormal} if, for every $g\in G\setminus H$, one has $H\cap gHg^{-1}=\{1\}$. Hence such a pair $H<G$ satisfies condition (ST). More generally, $H$ is said to be
\textit{almost malnormal} if, for every $g\in G\setminus H$, the subgroup $H\cap gHg^{-1}$ is finite. Thus, if $H<K<G$ is a triple that satisfies condition (ST), one can say equivalently that $H$ is \textit{almost malnormal in $G$ relative to $K$.}
\newline
(2) Following \cite{V10}, we say that a subgroup $H$ of a group $G$ is \emph{relatively malnormal} if there exists an intermediate subgroup $K<G$ of infinite index such that $gHg^{-1}\cap H$ is finite for all $g\in G\setminus K$. This means precisely that the triple $H<K<G$ satisfies condition (ST).
\end{remark}

If $G$ is a group and $H$ is a subgroup of $G$, there is a straightforward analogue of the one-sided quasi-normalizer of a subalgebra: we denote by $q\mathcal{N}_G^{(1)}(H)$ the set of elements $g\in G$ for which there exists finitely many elements $g_1,\ldots,g_n\in G$ such that $Hg\subset \bigcup_{i=1}^n g_iH$. In view of Theorem 1.3, it is natural to ask whether the triple of algebras $L(H)\subset L(K)\subset L(G)$ has the relative asymptotic homomorphism property if and only if $q\mathcal{N}_G^{(1)}(H)\subset K$. In the special case $H=K$, the authors of \cite{FGS} proved that it is indeed true, but their proof relied heavily on the main result of the article (\emph{i.e.} Theorem 1.3 in the present notes). We succeeded in providing a self contained proof in the context of group algebras in \cite{JWAHP}, and we recall some characterizations in the next theorem (where $\lm$ denotes the left regular representation of $G$ on $\ell^2(G)$.)

\begin{theorem}
(\cite{JWAHP}, Theorem 2.1)
Let $H<K<G$ be a triple of groups and let $B=L(H)\subset N=L(K)\subset M=L(G)$ be their associated von Neumann algebras. Then the following conditions are equivalent:
\begin{enumerate}
	\item [(1)] There exists a net $(h_i)_{i\in I}\subset H$ such that, for all $x,y\in M\ominus N$, one has
	$$
	\lim_{i\in I}\Vert \E_B(x\lm_{h_i} y)\Vert_2=0,
$$
i.e. the net of unitaries in the relative weak asymptotic homomorphism property may be chosen in the subgroup $\lm(H)$ of $U(B)$.
	\item [(2)] The triple $B\subset N\subset M$ has the relative weak asymptotic homomorphism property.
	\item [(3)] The subspace of $H$-fixed vectors $\ell^2(G/H)^H$ in the quasi-regular representation $\bmod\ H$ is contained in $\ell^2(K/H)$.
	\item [(4)] The one-sided quasi-normalizer $q\mathcal{N}_G^{(1)}(H)$ is contained in $K$.
	\item [(5)] The triple $H<K<G$ satisfies condition (SS), i.e. for every non empty finite set $F\subset G\setminus K$, there exists $h\in H$ such that $FhF\cap H=\emptyset$.
\end{enumerate}
\end{theorem}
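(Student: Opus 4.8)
The plan is to establish the chain $(1)\Rightarrow(2)\Rightarrow(5)\Rightarrow(1)$, and then to obtain $(5)\Leftrightarrow(3)$ and $(5)\Leftrightarrow(4)$ separately; these last two are of a purely combinatorial nature once condition (SS) is reformulated in terms of the $H$-orbits on the coset space $G/H$. The implication $(1)\Rightarrow(2)$ is immediate: $\lm(H)\subseteq U(B)$, so a net as in (1) is in particular a net of unitaries in $B$, and, since $x-\E_N(x)$ and $y-\E_N(y)$ lie in $M\ominus N$, the identity recalled in Remark 1.3(3) shows that (1) forces the relative weak asymptotic homomorphism property for that same net. For $(2)\Rightarrow(5)$ --- indeed for the equivalence $(2)\Leftrightarrow(5)$ --- one combines Remark 1.3(3) (the relative weak asymptotic homomorphism property is equivalent to relative weak mixing) with Proposition 2.3(a) (relative weak mixing of $L(H)$ in $L(G)$ with respect to $L(K)$ is equivalent to condition (SS)).

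The substantive step is $(5)\Rightarrow(1)$, where one must produce a single net of elements of $H$ that works simultaneously for all $x,y\in M\ominus N$. I would index a net by the finite subsets $F\subseteq G\setminus K$, directed by inclusion (assuming $K\neq G$, since otherwise $M\ominus N=\{0\}$ and there is nothing to prove), and for each such $F$ use Lemma 2.2(a) to choose $h_F\in H$ with $Fh_FF\cap H=\emptyset$. To verify that $(h_F)_F$ works, fix $x,y\in M\ominus N$ and $\varepsilon>0$; since $\E_N(x)=\E_N(y)=0$, the Fourier supports of $x$ and $y$ are contained in $G\setminus K$, so truncating the Fourier series yields finitely supported $x',y'$, with support in some finite $F_0\subseteq G\setminus K$, for which $\Vert x-x'\Vert_2$ and $\Vert y-y'\Vert_2$ are as small as we please. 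For any $F\supseteq F_0$, every product $gh_Fg'$ with $g,g'\in F_0$ lies in $Fh_FF$ and hence outside $H$, so $\E_B(x'\lm_{h_F}y')=0$; the required estimate then follows from the triangle inequality, the $L^2$-contractivity of $\E_B$, and the standard bounds $\Vert ab\Vert_2\le\Vert a\Vert_\infty\Vert b\Vert_2$ and $\Vert ab\Vert_2\le\Vert a\Vert_2\Vert b\Vert_\infty$. The one wrinkle is that Fourier truncation does not control operator norms, so $x'$ must be fixed first and then $y'$ chosen fine enough relative to $\Vert x'\Vert_\infty$.

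For $(5)\Leftrightarrow(3)$, I would note that a vector $\xi\in\ell^2(G/H)$ is fixed by $H$ in the quasi-regular representation precisely when it is constant along each $H$-orbit in $G/H$, which, being square summable, forces it to be supported on the union of the finite $H$-orbits; since $H\subseteq K$ makes $G\setminus K$ invariant under both left and right translation by $H$, every $H$-orbit in $G/H$ lies entirely in $K/H$ or entirely in $(G\setminus K)/H$. Hence $\ell^2(G/H)^H\subseteq\ell^2(K/H)$ holds iff every finite $H$-orbit is contained in $K/H$, iff every orbit meeting $(G\setminus K)/H$ is infinite, which is exactly condition (SS). For $(5)\Leftrightarrow(4)$, unwinding the definition shows that $g\in q\mathcal{N}_G^{(1)}(H)$ iff the set of cosets $\{hgH:h\in H\}$ --- i.e.\ the $H$-orbit of $gH$ --- is finite; thus $q\mathcal{N}_G^{(1)}(H)$ is exactly the set of $g$ whose coset has finite $H$-orbit, and, using again that orbits do not straddle $K/H$ and $(G\setminus K)/H$, the inclusion $q\mathcal{N}_G^{(1)}(H)\subseteq K$ is equivalent to the statement that every $g\notin K$ has infinite $H$-orbit on $G/H$, that is, to condition (SS). (This is in the spirit of Lemma 2.2 and the computations with $E(g,h)$ that precede it.)

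The main obstacle is $(5)\Rightarrow(1)$: the combinatorial core is handed to us by Lemma 2.2(a), so the real work is to pass from that cofinal statement about finite subsets of $G\setminus K$ to an $L^2$-asymptotic statement valid for arbitrary elements of $M\ominus N$, together with the routine but not entirely automatic Fourier-truncation and operator-norm bookkeeping indicated above. Everything else reduces either to results already established earlier in the paper or to an unwinding of condition (SS) in terms of $H$-orbits on $G/H$.
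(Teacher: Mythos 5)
The paper itself contains no proof of this statement: Theorem 2.5 is imported verbatim from \cite{JWAHP} (the text explicitly defers the self-contained proof to that reference), so there is no in-paper argument to compare yours against line by line. Judged on its own terms and against the results the paper does make available, your proposal is essentially sound. The implication $(1)\Rightarrow(2)$ via the identity of Remark 1.3(3) is correct; your $(5)\Rightarrow(1)$ is the genuine analytic content and is handled properly --- the net indexed by finite subsets of $G\setminus K$, the vanishing of $\E_B(x'\lm_{h_F}y')$ because the support of $x'\lm_{h_F}y'$ sits inside $Fh_FF\subset G\setminus H$, and the order of quantifiers in the truncation (fix $x'$ first, then choose $y'$ small in $\Vert\cdot\Vert_2$ relative to $\Vert x'\Vert_\infty$) are exactly the points that need care, and you address them. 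The combinatorial equivalences $(5)\Leftrightarrow(3)$ and $(5)\Leftrightarrow(4)$ via the observation that $H$-orbits on $G/H$ never straddle $K/H$ and $(G\setminus K)/H$, that $H$-fixed vectors are supported on finite orbits, and that $g\in q\mathcal{N}_G^{(1)}(H)$ iff the orbit of $gH$ is finite, are all correct (the last point is also recorded in Remark 2.6(1) of the paper).

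The one soft spot is $(2)\Rightarrow(5)$. You discharge it by citing Remark 1.3(3) together with Proposition 2.3(a), but Proposition 2.3(a) combined with Remark 1.3(3) \emph{is} the equivalence $(2)\Leftrightarrow(5)$, and the paper only asserts Proposition 2.3 with the phrase ``using the same type of arguments as in \cite{JS}'' --- it gives no proof either. So in a formal reading of the paper your deduction is legitimate, but if the goal is the self-contained proof that \cite{JWAHP} advertises, you have not supplied the substantive direction: one must show that if (SS) fails --- equivalently, if there is $g_0\in G\setminus K$ with $Hg_0H=\bigsqcup_{i=1}^n g_iH$ finite --- then \emph{no} net of unitaries in $L(H)$ (not merely no net of group unitaries $\lm_h$) can witness the relative weak asymptotic homomorphism property. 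The standard route is to take $x=\sum_i\lm_{g_i}\in M\ominus N$, note that $Bx\subset\sum_i\lm_{g_i}B$, and derive a uniform lower bound for $\Vert\E_B(x^*vx)\Vert_2$ over all $v\in U(B)$; this step is absent from your write-up and is not a triviality, so you should either carry it out or state explicitly that you are taking Proposition 2.3(a) as given.
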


\begin{remark}
(1) Let $G$ be a group and let $H$ be a subgroup of $G$. Then it is interesting to note
the following description of the one-sided quasi-normalizer of $H$ in $G$; it is certainly known to specialists: 
$$
 q\mathcal{N}_G^{(1)}(H)=\{g\in G: [H:H\cap gHg^{-1}]<\infty\}.
$$
Indeed, let $g\in G$ be arbitrary; let $u_1,\ldots,u_n,\ldots\in H$ be such that 
$$
H=\bigsqcup_j u_j(H\cap gHg^{-1}).
$$
Then it is easy to see that 
$$
HgHg^{-1}=\bigsqcup_j u_jgHg^{-1}.
$$
It implies that $HgH=\bigsqcup_j u_jgH$, hence that $[H:H\cap gHg^{-1}]<\infty$ if and only if $g\in q\mathcal{N}_G^{(1)}(H)$.
\\
(2) Let $H<K<G$ be a triple of groups. Condition (SS) is equivalent to the following apparently weaker \textit{condition (wSS)}: 
\par
\textit{For every nonempty finite set $F\subset G\setminus K$ and for every $g\in G\setminus K$, there exists $h\in H$ such that $Fhg\cap H=\emptyset$.} 
\par
Indeed, it is easy to see that condition (wSS) is a reformulation of condition (4) of Theorem 2.5, namely that $q\mathcal{N}_G^{(1)}(H)\subset K$.
\end{remark} 

We use Theorem 2.5 to give examples of triples of algebras $B\subset N\subset M$ with $N=\mathcal{N}_M(B)''$ in the case of group von Neumann algebras and crossed products.

\begin{theorem}
Let $H<K<G$ be a triple of groups such that $K=\mathcal{N}_G(H)$ is the normalizer of $H$ in $G$. 
We denote by $W\sp\ast(q\mathcal{N}_{L(G)}^{(1)}(L(H)))$ the von Neumann algebra generated by the one-sided quasi-normalizer of $L(H)$ in $L(G)$.
Then the following two conditions are equivalent:
\begin{enumerate}
	\item [(1)] $L(K)=W\sp\ast(q\mathcal{N}_{L(G)}^{(1)}(L(H)))$;
	\item [(2)] $H<K<G$ satisfies condition (SS).
\end{enumerate}
Thus, if the triple $H<K<G$ satisfies condition (SS), then $\Nor_{L(G)}(L(H))''=L(K)$,
and, moreover,
if $G$ acts on some finite von Neumann algebra $Q$, then 
$$
Q\rtimes K=\mathcal{N}_{Q\rtimes G}(Q\rtimes H)''.
$$
\end{theorem}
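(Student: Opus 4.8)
The plan is to prove the two implications separately, then deduce the two corollaries. For $(1)\Rightarrow(2)$: if condition (SS) fails, then by Theorem 2.5 (equivalence of (4) and (5)) there is an element $g\in q\mathcal{N}_G^{(1)}(H)\setminus K$. Since $\lambda_g$ lies in $q\mathcal{N}_{L(G)}^{(1)}(L(H))$ — indeed $Hg\subset\bigcup_i g_iH$ at the group level gives $L(H)\lambda_g\subset\sum_i\lambda_{g_i}L(H)$ — we would have $\lambda_g\in W^\ast(q\mathcal{N}_{L(G)}^{(1)}(L(H)))$, while $\lambda_g\notin L(K)$ because $g\notin K$ and $L(K)$ consists of elements supported on $K$. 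This contradicts (1). For $(2)\Rightarrow(1)$: assume (SS) holds. By Theorem 2.5 (equivalence of (5) with the relative weak asymptotic homomorphism property) and then Theorem 1.4 (Fang–Gao–Smith), $q\mathcal{N}_{L(G)}^{(1)}(L(H))\subset L(K)$, hence $W^\ast(q\mathcal{N}_{L(G)}^{(1)}(L(H)))\subset L(K)$. The reverse inclusion is automatic: every unitary $u\in\mathcal{N}_{L(G)}(L(H))$ satisfies $L(H)u=uL(H)\subset uL(H)$, so $u\in q\mathcal{N}_{L(G)}^{(1)}(L(H))$; thus $\mathcal{N}_{L(G)}(L(H))\subset q\mathcal{N}_{L(G)}^{(1)}(L(H))$, and since the hypothesis $K=\mathcal{N}_G(H)$ gives $\lambda_k\in\mathcal{N}_{L(G)}(L(H))$ for every $k\in K$, the generators $\lambda_k$ of $L(K)$ all lie in $q\mathcal{N}_{L(G)}^{(1)}(L(H))$. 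Therefore $L(K)=W^\ast(q\mathcal{N}_{L(G)}^{(1)}(L(H)))$.

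For the penultimate assertion, $\mathcal{N}_{L(G)}(L(H))''=L(K)$ under condition (SS): the chain of inclusions
$$
L(K)\subset\mathcal{N}_{L(G)}(L(H))''\subset W^\ast(q\mathcal{N}_{L(G)}^{(1)}(L(H)))\subset L(K)
$$
holds, the first inclusion because $K=\mathcal{N}_G(H)$ forces every $\lambda_k$ ($k\in K$) into $\mathcal{N}_{L(G)}(L(H))$, the second because any normalizing unitary is a one-sided quasi-normalizer as noted above, and the third by the already-established $(2)\Rightarrow(1)$. Hence equality throughout.

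For the crossed product statement, the strategy is to replace $L(\cdot)$ everywhere by $Q\rtimes_\sigma(\cdot)$ and rerun the same argument, invoking the last sentence of Proposition 2.4, which states that condition (SS) for $H<K<G$ implies $Q\rtimes_\sigma H$ is weakly mixing in $Q\rtimes_\sigma G$ relative to $Q\rtimes_\sigma K$; equivalently, by Remark 1.2(3) and Theorem 1.3, $q\mathcal{N}_{Q\rtimes G}^{(1)}(Q\rtimes H)\subset Q\rtimes K$. The reverse containment $Q\rtimes K\subset\mathcal{N}_{Q\rtimes G}(Q\rtimes H)''$ follows because $Q$ normalizes $Q\rtimes H$ (it even sits inside it) and each $\lambda_k$ with $k\in K=\mathcal{N}_G(H)$ normalizes $Q\rtimes H$, so the generators of $Q\rtimes K$ are normalizing unitaries; combined with the sandwich $Q\rtimes K\subset\mathcal{N}_{Q\rtimes G}(Q\rtimes H)''\subset q\mathcal{N}_{Q\rtimes G}^{(1)}(Q\rtimes H)''\subset Q\rtimes K$ we conclude $\mathcal{N}_{Q\rtimes G}(Q\rtimes H)''=Q\rtimes K$. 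The one point requiring a small verification — the step I expect to be the mildest obstacle — is that at the group-von-Neumann-algebra level $g\in q\mathcal{N}_G^{(1)}(H)$ genuinely yields $\lambda_g\in q\mathcal{N}_{L(G)}^{(1)}(L(H))$ and, in the crossed-product case, that $k\in\mathcal{N}_G(H)$ gives $\lambda_k(Q\rtimes H)\lambda_k^\ast=Q\rtimes H$; both are direct computations with Fourier coefficients, using that $\sigma_k$ preserves $Q$ and $k$ normalizes $H$.
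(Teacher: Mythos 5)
Your proof is correct and follows essentially the same route as the paper: condition (SS) via Theorem~2.5 and the Fang--Gao--Smith theorem gives $q\mathcal{N}_{L(G)}^{(1)}(L(H))\subset L(K)$, and the hypothesis $K=\mathcal{N}_G(H)$ closes the chain $L(K)\subset \mathcal{N}_{L(G)}(L(H))''\subset W^*(q\mathcal{N}_{L(G)}^{(1)}(L(H)))\subset L(K)$, with the identical argument for crossed products. The only (harmless) variation is in $(1)\Rightarrow(2)$, where you argue by contrapositive with a group-level witness $g\in q\mathcal{N}_G^{(1)}(H)\setminus K$ promoted to $\lambda_g$ rather than passing directly through the relative weak asymptotic homomorphism property; aside from a couple of off-by-one citation numbers (the crossed-product mixing statement is Proposition~2.3, and the identity you want is Remark~1.3(3)), everything checks out.
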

\begin{proof} 
(1) $\Rightarrow$ (2). By hypothesis, the triple of algebras $L(H)\subset L(K)\subset L(G)$ has the relative weak asymptotic homomorphism property, hence, by Theorem 2.5, $H<K<G$ satisfies condition (SS).
\newline
(2) $\Rightarrow$ (1).
By assumption, one has the following chain of inclusions and equalities:
$$
K=\mathcal{N}_G(H)\subset q\mathcal{N}_{G}^{(1)}(H)\subset K
$$
where the first inclusion follows from the definition and the second one from condition (SS). At the level of von Neumann algebras, this gives:
$$
L(K)=L(\mathcal{N}_G(H))\subset \mathcal{N}_{L(G)}(L(H))''\subset W^*(q\mathcal{N}_{L(G)}^{(1)}(L(H)))\subset L(K)
$$
where the last inclusion follows from the relative weak asymptotic homomorphism property. 
\par
For the case of crossed products,
the unitary groups $U(Q)$ and $K$ are contained in $\Nor_{Q\rtimes G}(Q\rtimes H)$, hence $Q\rtimes K\subset \Nor_{Q\rtimes G}(Q\rtimes H)''$.
As the latter algebra is trivially contained in $W^*(q\Nor_{Q\rtimes G}^{(1)}(Q\rtimes H))$, the assertion follows from Proposition 2.3 above, and from Theorem 3.1 of \cite{FGS}.
\end{proof}

\begin{remark}
(1) Let $H<G$ be a pair of groups; one may ask whether it is possible to have $\Nor_{L(G)}(L(H))''\not=W^*(q\mathcal{N}_{L(G)}^{(1)}(L(H)))$. It is indeed the case, as explained below.
\par 
Following Section 5 of \cite{FGS}, we denote by $H_1$ the quasi-normalizer of $H$ in $G$, \textit{i.e.}, the maximal subgroup of $q\Nor_G^{(1)}(H)\cap q\Nor_G^{(1)}(H)^{-1}$, and we let $H_2$ denote the subgroup of $G$ generated by $q\Nor_G^{(1)}(H)$. 
Finally, let $q\Nor_{L(G)}(L(H))''$ be the quasi-normalizer algebra of $L(H)$ in $L(G)$, \textit{i.e.}, the two-sided version of the von Neumann subalgebra $W^*(q\mathcal{N}_{L(G)}^{(1)}(L(H)))$.
Then Corollary 5.2 of \cite{FGS} shows that 
$$
q\Nor_{L(G)}(L(H))''=L(H_1)
$$ 
and that 
$$
W\sp\ast(q\mathcal{N}_{L(G)}^{(1)}(L(H)))=L(H_2).
$$ 
Example 5.3 of the same article shows that it may happen that $H_1\not=H_2$, hence that it is possible to have a triple $H<K=H_2<G$ that satisfies condition (SS) (by Corollaries 4.1 and 5.2 of \cite{FGS}) but with 
$\Nor_{L(G)}(L(H))''\subsetneqq W^*(q\mathcal{N}_{L(G)}^{(1)}(L(H)))$.
\newline
(2) Let $H<G$ be a pair of groups as above, $H$ being abelian. Corollary 5.7 of \cite{FGS} shows that if, furthermore, $L(H)$ is a MASA in $L(G)$, then 
$$
\mathcal{N}_{L(G)}(L(H))''=L(\mathcal{N}_G(H)).
$$
As will be seen in the next section, the last equality can fail if we do not assume that $L(H)$ is a MASA in $L(G)$. 
\par 
In fact,
the family of examples of triples of groups in the next section allows us to propose examples as well as counterexamples to the above equality, namely, we exhibit triples $H<K<G$ such that $K=\mathcal{N}_G(H)$ and $L(K)=\Nor_G(L(H))''$ on the one hand, and, on the other hand, we will see that there are triples $H<K<G$ with $K=\mathcal{N}_G(H)$
but $L(K)\subsetneqq \mathcal{N}_{L(G)}(L(H))''$.
\end{remark}

We end the present section with a result that applies in the framework of condition (ST), but its relationship with strong mixing is still unclear. It is a generalization of Corollary 2.6 of \cite{Popa83}. In order to state it, we recall the definition of commuting squares.
\par 
Let $M$ be a finite von Neumann algebra endowed with a normal, finite, faithful, normalized trace $\tau$ and let $B_0$ and $B_1$ be von Neumann subalgebras of $M$ with the same unit. The diagram
$$
\begin{array}{ccc}
B_0 & \subset & M\\
\cup & & \cup\\
B_0\cap B_1 & \subset & B_1
\end{array}
$$
is a \textit{commuting square} if 
$$
\E_{B_0\cap B_1}(b_0b_1)=\E_{B_0\cap B_1}(b_0)\E_{B_0\cap B_1}(b_1)
$$ 
for all $b_j\in B_j$, $j=0,1$, or, equivalently, if $\E_{B_0}\E_{B_1}=\E_{B_1}\E_{B_0}=\E_{B_0\cap B_1}$. See Chapter 4 in \cite{GHJ} for other equivalent conditions and further details.
As is well known, if $G_1$ and $G_2$ are subgroups of $G$, the system of inclusions 
$$
\begin{array}{ccc}
L(G_1)& \subset & L(G)\\
\cup & & \cup\\
L(G_1\cap G_2) & \subset & L(G_2)
\end{array}
$$
is a commuting square. Thus, for every subgroup $H$ of $G$ and for every $g\in G$, the following diagram is a commuting square
$$
\begin{array}{ccc}
L(gHg^{-1})& \subset & L(G)\\
\cup & & \cup\\
L(gHg^{-1}\cap H) & \subset & L(H).
\end{array}
$$
In particular, if $H<K<G$ is a triple that satisfies condition (ST), if $g\in G\setminus K$, the commuting square
$$
\begin{array}{ccc}
L(gHg^{-1})& \subset & L(G)\\
\cup & & \cup\\
L(gHg^{-1}\cap H) & \subset & L(H)
\end{array}
$$
satisfies the hypotheses of the following proposition since $L(gHg^{-1}\cap H)$ is finite-dimensional, hence atomic for every $g\in G\setminus K$. Thus, every such $g$ is orthogonal to $\Nor_{L(G)}(L(H))''$.

\begin{proposition}
Let $M$ be a finite von Neumann algebra endowed with some normal, faithful, finite, normalized trace $\tau$, let $1\in B\subset M$  be a diffuse von Neumann subalgebra of $M$ and let $u\in U(M)$ be such that
$$
\begin{array}{ccc}
uBu^*& \subset & M\\
\cup & & \cup\\
uBu^*\cap B & \subset & B
\end{array}
$$
is a commuting square, and assume that $uBu^*\cap B$ has the following properties: its center is atomic and its relative commutant $(uBu^*\cap B)'\cap M$ is diffuse (the latter conditions are automatically satisfied if $uBu^*\cap B$ itself is atomic). Then $u$ is orthogonal to $\mathcal{N}_M(B)''$.
\end{proposition}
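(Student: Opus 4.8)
The plan is to show that $\langle uau^*, n\rangle = 0$ for every $a \in B$ and every $n$ in the normalizer $\mathcal{N}_M(B)$, which by density suffices to conclude $u \perp \mathcal{N}_M(B)''$ in the Hilbert space $L^2(M,\tau)$; equivalently, since $u$ is a unitary, it suffices to show $\tau(nu^* a u) = 0$ for all such $n$ and $a$, i.e.\ that $\E_B(u^* n u) = 0$ for every $n \in \mathcal{N}_M(B)$, because then $\tau(n u^* a u) = \tau(a \E_B(u^* n u))$... wait, I should be more careful: what I actually want is that $\tau(u x^*) = 0$ for $x$ in the $*$-algebra generated by $\mathcal{N}_M(B)$, and it is enough to treat $x = n$ a single normalizing unitary. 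So the goal reduces to: \textbf{for every $n \in \mathcal{N}_M(B)$, one has $\E_B(n u^*) = 0$}, since $\tau(u n^*) = \tau(\E_B(u)n^*) $ only if... no. Let me restate cleanly. Since $u \perp \mathcal{N}_M(B)''$ means $\tau(u y^*) = 0$ for all $y \in \mathcal{N}_M(B)''$, and this holds iff it holds for $y = n$ a normalizing unitary, the claim is equivalent to $\tau(u n^*) = 0$, i.e.\ $\E_B(u n^* ) $ paired against $1$ vanishes; but more is true and easier to get: I will show $\E_B(un^*) = 0$ for every $n \in \mathcal{N}_M(B)$, or rather the symmetric statement involving $n^* u n$.

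First I would set $P = uBu^* \cap B$, which by hypothesis has atomic center and diffuse relative commutant $P' \cap M$. Fix $n \in \mathcal{N}_M(B)$ and write $w = n^* u$. The key algebraic observation is that for any $b \in B$, the commuting square hypothesis gives $\E_B(u b u^*) = \E_{uBu^* \cap B}(ubu^*) = \E_P(ubu^*)$, so the map $b \mapsto \E_B(ubu^*)$ lands in $P$ and in fact, restricted appropriately, is controlled by $P$. The strategy, following Popa's Corollary 2.6 of \cite{Popa83}, is to consider the element $n^* u \in M$ and use that $n$ normalizes $B$: for $b \in B$ we have $n^* u b u^* n = n^*\E$-type expressions, and one shows that the ``corner'' $\E_B(u \cdot u^*)$ behaves like an isometry-dilated multiple of a unit in $P$ unless it is zero. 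More precisely, I would let $x = \E_B(u n^*)$ (equivalently work with $\E_{B}(u^* n)$) and show, using the commuting square to slide conditional expectations past each other, that $x^* x$ and $x x^*$ are fixed by conjugation by a suitable diffuse subalgebra of $B$ — built from $P' \cap M \cap B$ or from a diffuse subalgebra of $B$ whose image under $u \cdot u^*$ still interacts trivially with $P$ — which, since that subalgebra is diffuse and the center of $P$ is atomic, forces $x$ to sit in the finite-dimensional part and then, by a trace computation using diffuseness again, forces $x = 0$.

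Concretely the main steps are: (i) reduce $u \perp \mathcal{N}_M(B)''$ to showing $\E_B(n^* u) = 0$ for every normalizing unitary $n$; (ii) using the commuting square, establish the identity $\E_B(u b u^* \cdot) = \E_P(u b u^* \cdot)$ and derive that $e := \E_B(n^* u)$ satisfies $b e = e \,\theta(b)$ for $b$ in a diffuse subalgebra $D \subset B$, where $\theta$ is the $*$-automorphism $\mathrm{Ad}(n^*)\circ \mathrm{Ad}(u)$ restricted suitably, equivalently $e^* e \in D' \cap M$ and $e e^* \in (\text{its image})' \cap M$; (iii) observe that because $\mathrm{Ad}(u)$ and $B$ together only meet in $uBu^* \cap B = P$ whose center is atomic, the positive operator $e^* e$ — which is $D$-central for a diffuse $D$ and also lies in the range of the expectation onto $P$ composed appropriately — must be a scalar multiple of a finite-rank central projection of $P$; (iv) finally compute $\tau(e^* e)$ using a weakly-null sequence of unitaries in the diffuse algebra $D$ (as in the proof of Proposition 1.5(2)) to conclude $\tau(e^* e) = 0$, hence $e = 0$.

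The hard part will be step (ii)–(iii): correctly identifying \emph{which} diffuse subalgebra $D$ of $B$ has the property that $\E_B(u D u^*)$ is forced into the atomic center of $P$, and then leveraging ``center atomic, relative commutant diffuse'' to kill the surviving finite-dimensional piece. The commuting square is exactly the tool that makes $\E_B \circ \mathrm{Ad}(u)$ factor through $P = uBu^* \cap B$, but turning the intertwining relation $b e = e \theta(b)$ into the conclusion $e = 0$ requires the diffuseness of $(uBu^*\cap B)' \cap M$ to produce enough room: one picks a diffuse abelian $D \subset (uBu^*\cap B)'\cap M$, checks $uDu^*$ still commutes with $P$, and runs a mixing-type averaging argument. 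I expect the bookkeeping of which algebra sits where — and checking that the hypotheses ``center atomic'' and ``relative commutant diffuse'' are used in precisely the two places indicated — to be the delicate point, whereas the final trace estimate is routine given Proposition 1.5.
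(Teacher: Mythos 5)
There is a genuine gap: you have the right reduction (it suffices to show $\tau(vu)=0$ for a single normalizing unitary $v$, since the linear span of $\mathcal{N}_M(B)$ is a weakly dense $*$-subalgebra of $\mathcal{N}_M(B)''$), and you correctly sense that the commuting square should make $\E_B\circ\mathrm{Ad}(u)$ factor through $P=uBu^*\cap B$, but the core mechanism in your steps (ii)--(iv) is never established and, as proposed, does not work. The intertwining relation $be=e\theta(b)$ for $e=\E_B(n^*u)$ holds only for those $b\in B$ with $u^*nbn^*u\in B$, i.e.\ for $b\in n^*Pn$ --- a conjugate of $P$, which may be atomic, so there is no reason a \emph{diffuse} $D\subset B$ satisfies it; and your fallback of taking $D$ inside $(uBu^*\cap B)'\cap M$ fails because such a $D$ need not lie in $B$, so normalization of $B$ by $n$ gives you nothing. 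Your step (iv) also invokes the weakly-null-unitaries estimate of Proposition 1.5(2), which requires a strong mixing hypothesis that is not available here.

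The paper's actual argument is different and you would need its key construction. Fix $v\in\mathcal{N}_M(B)$ and set $C=vuBu^*v^*\cap B=v(uBu^*\cap B)v^*$, which inherits atomic center and diffuse relative commutant. Because $Z(C)$ is atomic with minimal projections $z_j$ and $C'\cap B$ is diffuse, for every $\varepsilon>0$ one can build a partition of unity $(e_i)_{1\le i\le n}$ in $C'\cap B$ with $\Vert\E_C(e_i)\Vert\le\varepsilon$ for all $i$ (cut each corner $z_j(C'\cap B)z_j$ into $2^m$ pieces of equal normalized trace; then $\E_C(e_i)=\sum_j 2^{-m}z_j$). One then averages: $\tau(vu)=\tau\bigl(\E_{D'\cap M}(vu)\bigr)$ with $\E_{D'\cap M}(vu)=\sum_ie_ivue_i$, and the commuting square identity $\E_C(vue_iu^*v^*e_i)=\E_C(vue_iu^*v^*)\E_C(e_i)$ yields $\E_C\bigl(\lvert\sum_ie_ivue_i\rvert^2\bigr)\le\varepsilon$, hence $\lvert\tau(vu)\rvert^2\le\varepsilon$. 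This "small projections in the relative commutant" device is precisely where the two hypotheses on $uBu^*\cap B$ enter, and it is the ingredient missing from your sketch; without it (or a correct substitute), the proposal does not constitute a proof.
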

\begin{proof} 
Let us first fix $v\in\mathcal{N}_M(B)$ and let us prove that $\tau(vu)=0$. As $vBv^*=B$, the diagram
$$
\begin{array}{ccc}
vuBu^*v^*& \subset & M\\
\cup & & \cup\\
vuBu^*v^*\cap B & \subset & B
\end{array}
$$
is a commuting square as well, $C:=vuBu^*v^*\cap B=v(uBu^*\cap B)v^*$ has atomic center and its relative commutant $C'\cap M$ is still diffuse.
Moreover, recall that one has $\E_C(xb)=\E_C(bx)$ for all $b\in C'\cap B$, $x\in M$, and that $\E_C(b)\in Z(C)$. 
\par
If $(z_j)_{j\geq 1}$ is the set of minimal projections of the center $Z(C)$, then each reduced algebra $Cz_j$ is a finite subfactor of the reduced von Neumann algebra $z_jMz_j$, and its relative commutant is the diffuse algebra $z_j(C'\cap B)z_j=(Cz_j)'\cap z_jBz_j$. If $\tau_j$ is the normalized trace on $z_jMz_j$ defined by $\tau_j(z_jxz_j)=\frac{1}{\tau(z_j)}\tau(z_jxz_j)$ for all $x\in M$, then the associated conditional expectation $\E_{Cz_j}$ satisfies the following identity: $\E_{Cz_j}(z_jxz_j)=\E_C(z_jxz_j)z_j$ for all $x\in M$. In particular, one has $\E_{Cz_j}(y)=\tau_j(y)z_j$ for every $y\in z_j(C'\cap B)z_j$.
\par
Let us fix $\varepsilon>0$. We claim that there exists a partition of unity $(e_i)_{1\leq i\leq n}$ in $C'\cap B$ so that $\Vert \E_C(e_i)\Vert\leq\varepsilon$ for every $i$. Indeed, choose a positive integer $m$ such that $2^{-m}\leq\varepsilon$ and then, for every $j$, a partition of the unity $(e_{j,i})_{1\leq i\leq 2^m}\subset z_j(C'\cap B)z_j$ such that $\tau_j(e_{j,i})=2^{-m}$ for all $i$. Finally, set $n=2^m$ and $e_i=\sum_je_{j,i}$. Then $(e_i)_{1\leq i\leq n}$ is a partition of the unity in $C'\cap B$ and, by the above considerations,
$$
\E_C(e_i)=\sum_j\tau_j(e_{j,i})z_j=2^{-m}\leq\varepsilon\quad\forall i.
$$
Let $D$ be the abelian von Neumann algebra generated by the projections $(e_i)$. We recall that 
$$
\E_{D'\cap M}(x)=\sum_i e_ixe_i\quad\forall x\in M,
$$
and we will make use of the following identity
$$
\E_C(vue_iu^*v^*e_i)=\E_C(vue_iu^*v^*)\E_C(e_i)
$$
which is true since $vue_iu^*v^*\in vuBu^*v^*$, $e_i\in B$ and by the commuting square condition. One has:
\begin{eqnarray*}
|\tau(vu)|^2 &=&
|\tau(\E_{D'\cap M}(vu))|^2\leq \Vert \E_{D'\cap M}(vu)\Vert_2^2\\
&= &
\tau(|\E_{D'\cap M}(vu)|^2)=
\tau(\E_C(|\E_{D'\cap M}(vu)|^2)).
\end{eqnarray*}
But,
\begin{eqnarray*}
\E_C(|\E_{D'\cap M}(vu)|^2) &=&
\E_C\left(\left|\sum_i e_ivue_i\right|^2\right)\\
&=&
\E_C(\sum_{i,j}e_ivue_ie_ju^*v^*e_j)\\
&=&
\sum_i \E_C(e_ivue_iu^*v^*e_i)\\
&=&\sum_i \E_C(vue_iu^*v^*)\E_C(e_i)\\
&=&
\sum_i \E_C(vue_iu^*v^*)^{1/2}\E_C(e_i)\E_C(vue_iu^*v^*)^{1/2}\\
&\leq&
\varepsilon \sum_i\E_C(vue_iu^*v^*)=\varepsilon \E_C(vuu^*v^*)=\varepsilon.
\end{eqnarray*}
Thus, one has $|\tau(vu)|^2\leq \varepsilon$. By weak density, we get $\tau(xu)=0$ for every $x\in\mathcal{N}_M(B)''$.
\end{proof}

\begin{remark}
Let $1\in C\subset M$ be a pair of finite von Neumann algebras. As we have seen in the proof above, the existence, for every $\varepsilon>0$ of partitions of the unity $(e_i)_{1\leq i\leq n}$ in $C'\cap M$ such that $\Vert \E_C(e_i)\Vert \leq \varepsilon$ for all $i$, is implied by two conditions: (i) the center $Z(C)$ is atomic, and (ii) the relative commutant $C'\cap M$ is diffuse. In fact, the existence of such partitions of the unity requires both conditions. Indeed, suppose for instance that $M$ is a type II$_1$ factor; firstly, if $C$ is a MASA in $M$, then its center and its relative commutant are equal and diffuse, and $\E_C(e)=e$ for every nonzero projection $e\in C'\cap M=C$, thus $\Vert \E_C(e)\Vert=1$ for all such projections. Secondly, if $C$ is a subfactor of finite index in $M$, then its center is atomic, but its relative commutant is finite dimensional. Hence there exists a constant $c>0$ such that
$$
\Vert E_C(e)\Vert =\tau(e)\geq c
$$
for every nonzero projection $e\in C'\cap M$.
\end{remark}

\section{Families of examples}

As indicated above, we are going to give examples of triples of groups which satisfy conditions (SS) or (ST). They will be based on semidirect products.
\par 
Thus, let $K$ be a group, let $H<K$ be a subgroup of $K$ and assume that $K$ acts on some group $A$ through an action $\al$. Put $G=A\rtimes K$, and identify $K$ with the subgroup $\{e\}\times K$ of $G$. For future use, we put $A^*=A\setminus\{e\}$.
\par 
A case that might be of interest comes from generalized wreath product groups as in the next example.

\begin{example}
Let $K$ be an arbitrary countable group. Assume that $K$ acts on some countable set $X$, and take any nontrivial group $Z$. Let $A=Z^{(X)}$ be the group of all maps $a:X\rightarrow Z$ such that $a(x)=e$ except for some finite subset of $X$. Then $K$ acts by left translation on $A$, and the corresponding group $G$ is the generalized wreath product group $Z\wr_X K$. 
\end{example}

We present first a condition which implies that $K$ is the normalizer of $H$ in $G$.

\begin{proposition}
Let $H<K<G=A\rtimes K$ be a triple as above. Assume furthermore that $H$ is a normal subgroup of $K$ and that $e\in A$ is the only element $a\in A$ such that $\al_h(a)=a$ for all $h\in H$. Then $\Nor_G(H)=K$.
\end{proposition}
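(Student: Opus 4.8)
The plan is to verify the two inclusions $K\subseteq\Nor_G(H)$ and $\Nor_G(H)\subseteq K$ by a direct computation in the semidirect product, using the conventions $(a,k)(b,l)=(a\al_k(b),kl)$ and $(a,k)^{-1}=(\al_{k^{-1}}(a^{-1}),k^{-1})$, together with the identifications $h\equiv(e,h)$ for $h\in H$ and $k\equiv(e,k)$ for $k\in K$. The inclusion $K\subseteq\Nor_G(H)$ is immediate: for $k\in K$ and $h\in H$ one computes $(e,k)(e,h)(e,k)^{-1}=(e,khk^{-1})$, which lies in $H$ because $H$ is normal in $K$; applying the same identity to $k^{-1}$ gives $(e,k)H(e,k)^{-1}=H$.

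For the reverse inclusion, I would take an arbitrary $g=(a,k)\in\Nor_G(H)$ and compute, for every $h\in H$,
$$
g(e,h)g^{-1}=\bigl(a\,\al_{khk^{-1}}(a^{-1}),\,khk^{-1}\bigr).
$$
Since $gHg^{-1}\subseteq H$ and $H=\{(e,h'):h'\in H\}$, the $A$-component must be trivial, i.e. $a\,\al_{khk^{-1}}(a^{-1})=e$, equivalently $\al_{khk^{-1}}(a)=a$, for every $h\in H$. As $h$ ranges over $H$ with $k\in K$ fixed, the element $khk^{-1}$ ranges over all of $kHk^{-1}=H$ (this is exactly where the normality of $H$ in $K$ enters), so $\al_{h'}(a)=a$ for every $h'\in H$. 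By the standing hypothesis that $e$ is the only element of $A$ fixed by all $\al_h$, $h\in H$, we conclude $a=e$, hence $g=(e,k)\in K$. Combining the two inclusions gives $\Nor_G(H)=K$.

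There is no real obstacle here: the argument is a short and essentially mechanical computation in $A\rtimes K$. The only points that require a little care are keeping the multiplication and inversion formulas for the semidirect product consistent throughout, and invoking the normality of $H$ in $K$ at precisely the right step, namely to turn the set $\{khk^{-1}:h\in H\}$ into all of $H$. It is worth noting that the proof uses only the inclusion $gHg^{-1}\subseteq H$, so it actually shows that any $g\in G$ with $gHg^{-1}\subseteq H$ already belongs to $K$.
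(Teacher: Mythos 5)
Your proof is correct and follows essentially the same route as the paper: the same semidirect product computation $g(e,h)g^{-1}=(a\,\al_{khk^{-1}}(a^{-1}),khk^{-1})$, followed by the observation that normality of $H$ in $K$ lets $khk^{-1}$ range over all of $H$, forcing $a=e$. Your closing remark that the argument only needs $gHg^{-1}\subseteq H$ is a correct (if minor) strengthening.
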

\begin{proof}
One has obviously $K\subset\Nor_G(H)$. Conversely, if $g=(a,k)\in \Nor_G(H)$, then one has for every $h\in H$:
\begin{eqnarray*}
ghg^{-1}
&=&
(a,k)(e,h)(\al_{k^{-1}}(a^{-1}),k^{-1})=(a,kh)(\al_{k^{-1}}(a^{-1}),k^{-1})\\
&=&
(a\al_{khk^{-1}}(a^{-1}),khk^{-1})
\end{eqnarray*}
which belongs to $H$ if and only if $\al_{khk^{-1}}(a)=a$ for every $h\in H$, if and only if $\al_h(a)=a$ for every $h\in H$. Hence $a=e$.
\end{proof}

Let us see now under which conditions the triple $H<K<G=A\rtimes K$ satisfies condition (SS) or (ST). It is partly inspired by Theorem 2.2 of \cite{RSS}. See also Proposition 4.6 of \cite{JS}.

\begin{theorem}
Let $H<K<G=A\rtimes K$ be a triple as above.
\begin{enumerate}
\item[(a)] The triple $H<K<G$ satisfies condition (ST) if and only if, for every $a\in A^*$, $|\{h\in H:\al_h(a)=a\}|<\infty$.
\item[(b)] The triple $H<K<G$ satisfies condition (SS) if and only if, for every finite set $E\subset A^*$, there exists $h\in H$ such that $E\cap \al_h(E)=\emptyset$. Equivalently, for every $a\in A^*$, the $H$-orbit $H\cdot a$ is infinite.
\end{enumerate}
\end{theorem}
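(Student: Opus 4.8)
The plan is to derive both equivalences from the combinatorial characterizations of conditions (SS) and (ST) in Lemma 2.2, feeding them the explicit multiplication law of $G=A\rtimes K$. Throughout one identifies $A$ with $A\times\{e\}$ and $K$ with $\{e\}\times K$, so that $G\setminus K=\{(a,k):a\in A^*\}$, and one uses the identity $(a,k)(e,h)(b,l)=(a\,\al_{kh}(b),\,khl)$. In particular $(a,k)(e,h)(b,l)\in H$ forces simultaneously $khl\in H$ and $\al_h(b)=\al_{k^{-1}}(a^{-1})$.

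For part (a), I would use the equivalence $(1)\Leftrightarrow(4)$ in Lemma 2.2(b): condition (ST) holds if and only if $E(g)=gHg^{-1}\cap H$ is finite for every $g\in G\setminus K$. Writing $g=(a,k)$ with $a\in A^*$, the multiplication law gives $E(g)=\{(e,khk^{-1}):h\in H,\ khk^{-1}\in H,\ \al_h(b)=b\}$ where $b:=\al_{k^{-1}}(a)\in A^*$; hence $\abs{E(g)}\le\abs{\{h\in H:\al_h(b)=b\}}$. Therefore, if all stabilizers $\{h\in H:\al_h(a)=a\}$, $a\in A^*$, are finite, then every $E(g)$ is finite and (ST) holds. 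Conversely, specializing to $g=(a,e)$, which lies in $G\setminus K$ because $a\in A^*$, one finds $E((a,e))=\{(e,h):h\in H,\ \al_h(a)=a\}$ exactly, so (ST) forces each such stabilizer to be finite.

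For part (b), I would use Lemma 2.2(a): condition (SS) holds if and only if for every nonempty finite $F\subset G\setminus K$ there is $h\in H$ with $FhF\cap H=\emptyset$. To prove that the orbit condition implies (SS): given $F=\{(a_i,k_i)\}_{i=1}^n\subset G\setminus K$, form the finite set $E:=\{a_j:1\le j\le n\}\cup\{\al_{k_i^{-1}}(a_i^{-1}):1\le i\le n\}\subset A^*$ (note $\al_{k_i^{-1}}(a_i^{-1})\ne e$ since $a_i\ne e$ and $\al_{k_i^{-1}}$ is an automorphism), choose $h\in H$ with $\al_h(E)\cap E=\emptyset$, and observe from the multiplication law that no product $(a_i,k_i)(e,h)(a_j,k_j)$ can have trivial $A$-component, hence none lies in $H$. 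To prove the converse: given a finite $E\subset A^*$ one may assume $E=E^{-1}$ (replacing $E$ by $E\cup E^{-1}$), put $F:=\{(a,e):a\in E\}$, and pick $h\in H$ with $FhF\cap H=\emptyset$; since $(a,e)(e,h)(b,e)=(a\,\al_h(b),h)$, this says $\al_h(b)\ne a^{-1}$ for all $a,b\in E$, i.e. $\al_h(E)\cap E=\emptyset$ because $E=E^{-1}$. Finally, the equivalence between the property ``for every finite $E\subset A^*$ there is $h\in H$ with $\al_h(E)\cap E=\emptyset$'' and the property ``every orbit $H\cdot a$, $a\in A^*$, is infinite'' is the elementary fact about group actions recalled in the paragraph preceding Lemma 2.2 (Lemma 2.2 of \cite{KT}), applied to $H\curvearrowright A^*$.

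I do not expect a serious obstacle: the argument is entirely a matter of choosing the right finite sets to feed into Lemma 2.2. The only points requiring a little care are the bookkeeping with the $K$-components --- one must notice that to obtain $FhF\cap H=\emptyset$ it suffices to defeat the $A$-component condition $\al_h(a_j)=\al_{k_i^{-1}}(a_i^{-1})$, the condition $k_ihk_j\in H$ being irrelevant --- and the symmetrization $E\leadsto E\cup E^{-1}$ that bridges the gap between ``$\al_h(b)\ne a^{-1}$'' and ``$\al_h(E)\cap E=\emptyset$''.
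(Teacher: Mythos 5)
Your proposal is correct and follows essentially the same route as the paper: both directions of (a) reduce to computing $gHg^{-1}\cap H$ for $g=(a,k)$ via the multiplication law (your change of variable $b=\al_{k^{-1}}(a)$ versus the paper's $k'=khk^{-1}$ is immaterial), and in (b) you choose exactly the same test sets $F=\{(a,e):a\in E\}$ with $E=E^{-1}$ and $E=F_1\cup\{\al_{k^{-1}}(a^{-1}):k\in F_2,\ a\in F_1\}$ that the paper uses. No gaps.
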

\begin{proof}
(a) $(\Rightarrow)$ Let $a\in A^*$. Then $(a,e)\in G\setminus K$, and $H\cap (a,e)H(a^{-1},e)$ is finite. But, if $h\in H$, one has
$(a,e)(e,h)(a^{-1},e)=(a\al_h(a^{-1}),h))\in H$ if and only if $\al_h(a)=a$. The set of such elements $h\in H$ must be finite.\\
$(\Leftarrow)$ Let $g\in G\setminus K$; let us prove that $H\cap gHg^{-1}$ is finite. Put $g=(a,k)$. If $h\in H$, one has
\begin{eqnarray*}
ghg^{-1}
&=&
(a,k)(e,h)(\al_{k^{-1}}(a^{-1}),k^{-1})\\
&=&
(a,kh)(\al_{k^{-1}}(a^{-1}),k^{-1})\\
&=&
(a\al_{khk^{-1}}(a^{-1}),khk^{-1}).
\end{eqnarray*}
Thus, $ghg^{-1}\in H$ if and only if $khk^{-1}\in H$ and $\al_{khk^{-1}}(a)=a$. Hence $H\cap gHg^{-1}$ is finite.
\par\vspace{1mm}\noindent
(b) $(\Rightarrow)$ Let $E\subset A^*$ be finite. Replacing $E$ by $E\cup E^{-1}$ if necessary, we assume that $E=E^{-1}$. Then $F:=\{(a,e): a\in E \}\subset G\setminus K$ and there exists $h\in H$ such that $F(e,h)F\cap H=\emptyset$. In particular, $(e,h)(a,e)=(\al_h(a),h)\notin FH$ for every $a\in E$ and $E\cap\al_h(E)=\emptyset$.\\
$(\Leftarrow)$ Let $F\subset G\setminus K$ be finite. There exist $F_1\subset A^*$ and $F_2\subset K$ finite such that $F\subset F_1\times F_2$. Observe that, for $(a_j,k_j)\in F_1\times F_2$, $j=1,2$, and for $h\in H$, one has
$$
(a_1,k_1)(e,h)(a_2,k_2)=(a_1,k_1h)(a_2,k_2)=(a_1\al_{k_1h}(a_2),k_1hk_2)
$$
hence, if it belongs to $H$, one must have $a_1\al_{k_1}(\al_h(a_2))=1$, or equivalently, $\al_h(a_2)=\al_{k_1^{-1}}(a_1^{-1})$. Taking 
$E=F_1\cup\{\al_{k^{-1}}(a^{-1}): k\in F_2, a\in F_1 \}$, if $h\in H$ is such that $E\cap \al_h(E)=\emptyset$, then necessarily, $F(e,h)F\cap H=\emptyset$.
\end{proof}

\begin{remark}
Let us consider the case where $A=Z^{(X)}$ as in Example 3.1: $K$ acts on the countable set $X$. Then $|\{h\in H:\al_h(a)=a\}|<\infty$ for every $a\in A^*$
if and only if, for every $x\in X$, the stabilizer $H_x=\{h\in H: h\cdot x=x\}$ is finite. By Proposition 2.3 of \cite{KT}, it is equivalent to the fact that the action of $H$ by generalized Bernoulli shifts on $(Y^X,\nu^X)$ is mixing, where $(Y,\nu)$ is some standard probability space. 
\par
In the same vein, for every nonempty, finite set $E\subset A^*$ one can find $h\in H$ such that $E\cap \al_h(E)=\emptyset$ if and only if the action of $H$ on $X$ has infinite orbits. By Proposition 2.1 of \cite{KT}, it is equivalent to the fact that the corresponding Bernoulli shift action of $H$ on $(Y^X,\nu^X)$ is weakly mixing.
\end{remark}

The following corollary is a straightforward consequence of Theorem 2.7, Proposition 3.2 and of Theorem 3.3. 
 
\begin{corollary}
Let $H, K$ and $A$ be as above, denote by $H<K<G$ the associated triple of groups and assume that:
\begin{enumerate}
	\item [(1)] for every $a\in A^*$, there exists $h\in H$ such that $\al_h(a)\not=a$;
	\item [(2)] for every finite set $E\subset A^*$, there exists $h\in H$ such that $E\cap \al_h(E)=\emptyset$.
\end{enumerate}
Then $L(K)=\mathcal{N}_{L(G)}(L(H))''$. More generally, if $G$ acts on some finite von Neumann algebra $(Q,\tau)$, then 
$Q\rtimes K=\mathcal{N}_{Q\rtimes G}(Q\rtimes H)''$.
\end{corollary}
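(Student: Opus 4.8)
The plan is to verify the two hypotheses of Theorem 2.7 for the triple $H<K<G=A\rtimes K$ and then invoke it directly. Recall that Theorem 2.7 asks for (i) the equality $K=\mathcal{N}_G(H)$ and (ii) that the triple $H<K<G$ satisfy condition (SS); granted both, it produces at once the equality $L(K)=\mathcal{N}_{L(G)}(L(H))''$ together with its crossed product counterpart $Q\rtimes K=\mathcal{N}_{Q\rtimes G}(Q\rtimes H)''$.

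For (i), I would note that condition (1) is precisely the contrapositive of the fixed-point hypothesis of Proposition 3.2: to say that every $a\in A^*$ admits some $h\in H$ with $\alpha_h(a)\neq a$ is exactly to say that $e$ is the only element of $A$ fixed by all of $H$. Since $H$ is normal in $K$, Proposition 3.2 then applies and gives $\mathcal{N}_G(H)=K$.

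For (ii), condition (2) is verbatim the criterion of Theorem 3.3(b), namely that for every finite $E\subset A^*$ there is $h\in H$ with $E\cap\alpha_h(E)=\emptyset$. Hence Theorem 3.3(b) tells us immediately that $H<K<G$ satisfies condition (SS).

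Having checked (i) and (ii), I would conclude by quoting Theorem 2.7: it yields $L(K)=\mathcal{N}_{L(G)}(L(H))''$, and, since a trace-preserving action of $G=A\rtimes K$ on $(Q,\tau)$ is exactly the extra datum its final clause accepts, also $Q\rtimes K=\mathcal{N}_{Q\rtimes G}(Q\rtimes H)''$. No genuine obstacle arises: the argument is pure assembly, and the only point meriting a word of care is the translation between condition (1) and the hypothesis of Proposition 3.2, for which the normality of $H$ in $K$ is what makes that proposition available.
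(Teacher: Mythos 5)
Your proof is correct and is exactly the paper's intended argument: the paper gives no separate proof, stating only that the corollary is a straightforward consequence of Theorem 2.7, Proposition 3.2 and Theorem 3.3, which is precisely the assembly you carry out (including the correct observation that normality of $H$ in $K$ is what licenses Proposition 3.2).
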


As promised in the preceeding section, we give examples of triples $H<K<G$ such that $\mathcal{N}_G(H)=K$ but $L(K)\subsetneqq \mathcal{N}_{L(G)}(L(H))''$.

\begin{example}
Let $H\triangleleft K$ be infinite, countable groups, let $A$ be an infinite, countable group endowed with an action of $K$, and assume that:
\begin{enumerate}
	\item [(i)] for every $a\in A^*$, there exists $h\in H$ such that $\al_h(a)\not=a$;
	\item [(ii)] there exists $a_0\in A^*$ whose orbit $H\cdot a_0$ is finite.
\end{enumerate}
Let $H<K<G$ be the associated triple of groups. Then the first condition implies that $\mathcal{N}_G(H)=K$ by Proposition 3.2, and the second one implies that the triple does not satisfy condition (SS) by Theorem 3.3. Let us prove that $L(K)\subsetneqq \mathcal{N}_{L(G)}(L(H))''$. Put
$F=\{(\al_h(a_0),e): h\in H\}$
which is a finite set by the second condition above. Define
$$
x=\sum_{g\in F\cup F^{-1}}\lm_g.
$$
Then $x$ is a selfadjoint element of $L(G)\ominus L(K)$ since $F\cap K=\emptyset$. Furthermore, it is easy to see that $x\in L(H)'\cap L(G)$. As $x\notin L(K)$, there exists a spectral projection $e$ of $x$ which does not belong to $L(K)$ either. Then $u:=2e-1$ is a unitary element of $L(H)'\cap L(G)$ hence it belongs to the normalizer $\mathcal{N}_{L(G)}(L(H))$, but $u\notin L(K)$. Thus $L(K)\subsetneqq \mathcal{N}_{L(G)}(L(H))''$. Observe that $H$ can be an abelian group, and this shows that, in order to have the equality 
$\mathcal{N}_{L(G)}(L(H))''=L(\mathcal{N}_G(H))$ in Corollary 5.7 of \cite{FGS}, one must assume that $L(H)$ is a MASA in $L(G)$.
\end{example}

\bibliographystyle{amsplain}
\bibliography{max}

\end{document}